\newcommand{\bunderline}[1]{\underline{#1\mkern-4mu}\mkern4mu }
\def\B{\mathcal B}
\def\F{\mathcal F}
\def\ereals{\overline{\mathbb{R}}}
\def\downto{{\raise 1pt \hbox{$\scriptstyle \,\searrow\,$}}}
\def\u{\mathnormal u}
\def\x{\mathnormal x}
\def\R{\mathbb R}
\def\reals{\mathbb R}
\def\epi{\mathop{epi}}
\newcommand{\gph}{\mathop {\rm gph}}
\def\epi{\mathop{\rm epi}\nolimits}
\def\cl{\mathop{\rm cl}\nolimits}
\def\dom{\mathop{\rm dom}\nolimits}
\def\mli{\mathop{\rm \mu\text{-}liminf}}
\def\interior{\mathop{\rm int}}
\def\rinterior{\mathop{\rm rint}}
\def\aff{\mathop{\rm aff}}
\def\uball{\mathbb{B}}
\def\minimize{\mathop{\rm minimize}\limits}
\begin{document}
\title{Duality in convex problems of {B}olza over functions of bounded variation}
\author{Teemu Pennanen\thanks{Department of Mathematics, King's College London, Strand, London WC2R 2LS, United Kingdom ({\tt teemu.pennanen@kcl.ac.uk}).}
	\and Ari-Pekka Perkki\"o\thanks{Department of Mathematics and Systems Analysis, Aalto University, P.O. Box 11100, FI-00076 Aalto, Finland ({\tt ari-pekka.perkkio@aalto.fi}). Corresponding author.} 
}

\maketitle

\begin{abstract}
This paper studies convex problems of Bolza in the conjugate duality framework of Rockafellar. We parameterize the problem by a general Borel measure which has direct economic interpretation in problems of financial economics. We derive a dual representation for the optimal value function in terms of continuous dual arcs and we give conditions for the existence of solutions. Combined with well-known results on problems of Bolza over absolutely continuous arcs, we obtain optimality conditions in terms of extended Hamiltonian conditions.
\end{abstract}

\begin{keywords} 
Calculus of variations, convex duality, Hamiltonian conditions, impulsive control
\end{keywords}
\begin{AMS}
49N15, 49N25, 46N10, 49J24, 49K24, 49J53
\end{AMS}

\pagestyle{myheadings}
\thispagestyle{plain}
\markboth{T. PENNANEN AND A.-P. PERKKI\"O}{DUALITY IN CONVEX PROBLEMS OF BOLZA}

\section{Introduction} 

Problems of Bolza were introduced a century ago as a general class of problems in the calculus of variations \cite{bol13}. In a sequence of papers in the 70's, Rockafellar extended the theory to possibly nonsmooth and extended real-valued convex Lagrangian's and end-point penalties. This extension allows for treating convex problems of optimal control under the same framework. Rockafellar's original formulation was over absolutely continuous arcs \cite{roc70b}, but soon after, he generalized it to arcs of bounded variation~\cite{roc72a,roc76b}. We refer the reader to \cite[Section~6.5]{mor6b} for a general account of the history of optimal control and the calculus of variations. 

The present paper extends the theory of convex problems of Bolza in two directions. First, we relax the continuity assumptions on the domain of the Hamiltonian using recent results of Perkki\"o~\cite{per12} on conjugates of convex integral functionals. Second, we parameterize the primal problem with a general Borel measure that shifts the derivative rather than the state. Our parameterization is of interest in financial economics where the parameter may represent e.g.\ endowments and/or liabilities of an economic agent. The relaxed continuity requirements allow discontinuous state constraints both in the primal and the dual.

Given $T>0$, let $X$ be the space of left-continuous functions $x:\reals_+\to\reals^d$ of bounded variation such that $x$ is constant after $T$. The space $X$ may be identified with $\reals^d\times M$ where $M$ is the space of $\reals^d$-valued Radon measures on $[0,T]$. Indeed, given $x\in X$ there is a unique $\reals^d$-valued Radon measure $Dx$ on $[0,T]$ such that $x_t=x_0+Dx([0,t))$ for all $t\in[0,T]$ and $x_t=x_0+Dx([0,T])$ for $t>T$; see e.g.~\cite[Theorem~3.29]{fol99}. The value of $x\in X$ on $(T,\infty)$ will be denoted by $x_{T+}$. 

Given an atomless strictly positive\footnote{A measure $\mu$ is strictly positive if $\mu(O)>0$ for every nonempty open $O$.} Radon measure $\mu$ on $[0,T]$, a proper convex normal integrand $K:\R^d\times\R^d\times[0,T]\to\ereals$ and a proper convex lower semicontinuous function $k:\R^d\times\R^d\to\ereals$, we will study the parametric optimization problem
\begin{equation}\label{P_u}\tag{P$_u$}
\minimize\quad J_K(x,Dx + u)+k(x_{0},x_{T+})\quad\text{over $x\in X$},
\end{equation}
where $u\in M$ and $J_K:X\times M\to\ereals$ is given by
\[
J_K(x,\theta)=\int   K_t(x_t,(d\theta^a/d\mu)_t)d\mu_t+\int   K^\infty_t(0,(d\theta^s/d|\theta^s|)_t)d|\theta^s|_t.
\]
Here $\theta^a$ and $\theta^s$ denote the absolutely continuous and the singular parts of $\theta$ with respect to $\mu$, $|\theta^s|$ denotes the total variation of $\theta^s$ and $K^\infty_t$ the recession function of $K_t$; see the appendix. Throughout this paper, we define the integral of a measurable function as $+\infty$ unless its positive part is integrable. Similarly, the sum of finite collection of extended real numbers is defined as $+\infty$ if any of the terms equals $+\infty$. It follows that $J_K$ as well as the objective in \eqref{P_u} are well-defined extended real-valued functions on $X\times M$.

When $u=0$ and the minimization is restricted to the space $AC$ of absolutely continuous functions with respect to $\mu$, problem \eqref{P_u} can be written in the more familiar form
\begin{equation}\label{P_AC}\tag{P$_{AC}$}
\minimize \quad \int K_t(x_t,\dot x_t)d\mu_t+k(x_{0},x_{T})\quad\text{over $x\in AC$},
\end{equation}
where $\dot x$ denotes the Radon--Nikodym derivative of $Dx$ with respect to $\mu$. Such problems have been extensively studied since \cite{roc70b} (often in the case where $\mu$ is the Lebesque measure). Allowing $K$ and $k$ to be extended real-valued, various more traditional problems in calculus of variations and optimal control can be written in the above form; see \cite{roc70b,roc78} for details. Problems of the form \eqref{P_u} with $u=0$ extend \eqref{P_AC} by allowing for discontinuous trajectories. In the context of optimal control, discontinuous trajectories correspond to impulsive control. Rockafellar~\cite{roc76b} developed a duality theory for problems of the form \eqref{P_u} with $u=0$ in the case where\footnote{Throughout this paper, $\delta_C$ denotes the {\em indicator function} of a set $C$, i.e.\ $\delta_C(x)=0$ if $x\in C$ and $\delta_C(x)= +\infty$ otherwise.} $k=\delta_{\{(a,b)\}}$.

Much as in \cite{roc70b,roc71b,roc72a,roc76b,roc78}, we will study \eqref{P_u} by embedding it in the general conjugate duality framework of \cite{roc74}. We give sufficient conditions under which the infimum in \eqref{P_u} is attained for every $u$ and the {\em value function}
\begin{equation*}
  \varphi(u)=\inf_{x\in X}\{J_K(x,Dx+u)+k(x_{0},x_{T+})\}
\end{equation*}
of \eqref{P_u} has the dual representation
\[
\varphi(u)=\sup_{y\in C\cap X}\{\langle u,y\rangle - J_{\tilde K}(y,Dy) - \tilde k(y_0,y_T)\},
\]
where $C$ denotes the space of continuous function on $[0,T]$ and $\tilde K$ and $\tilde k$ are given in terms of the conjugates of $K_t$ and $k$ as 
\begin{align*}
  {\tilde K}_t(y,v)=K^*_t(v,y)&=\sup_{\x,\u\in\R^d}\{x\cdot v + u\cdot y - K_t(x,u) \},\\
  \tilde k(\tilde a,\tilde b)=k^*(\tilde a,-\tilde b)&=\sup_{a,b\in\R^d}\{a\cdot \tilde a - b\cdot \tilde b -k(a,b)\}.
\end{align*}
This paper relaxes the continuity assumptions made in \cite{roc72a,roc76b} on the domain of the associated Hamiltonian
\[
H_t(x,y)=\inf_{u\in\R^d}\{K_t(x,u) - u\cdot y\}.
\]
This turns out to have significant consequences in certain problems of financial economics where the continuity relates to the behavior of financial markets; see \cite{pp13b} for details. We also show that that our relaxed continuity assumptions allow for optimality conditions in terms of an extended Hamiltonian equation. Combined with the results of \cite{roc71b} on problems of Bolza over absolutely continuous arcs, we obtain necessary and sufficient conditions of optimality in \eqref{P_u} with $u=0$.

\section{Conjugate duality}\label{sec:cd}

A set-valued mapping $S:[0,T]\rightarrow\R^d$ is {\em measurable} if the preimage $S^{-1}(A):=\{t\in [0,T]\mid S_t\cap A\neq\emptyset\}$ of every open $A\subset \R^d$ is measurable. An extended real-valued function $h$ on $\reals^n\times [0,T]$ is a {\em proper convex normal integrand} if the set-valued mapping $t\mapsto\epi h_t(\cdot)$ is closed convex-valued and measurable, and $h_t(\cdot)$ is proper for all $t$. By \cite[Corollary~14.34]{rw98}, this implies that $h$ is an $\B(\reals^d)\otimes \B([0,T])$-measurable function on $\reals^d\times [0,T]$, so $t\mapsto h_t(x_t)$ is an $\F$-measurable extended real-valued function and 
\[
I_h(x)=\int  h_t(x_t)d\mu_t
\]
is well-defined for every $\B([0,T])$-measurable $x:[0,T]\to\reals^d$. For every $t$, the recession function $x\mapsto h_t^\infty(x)$ is a closed and sublinear convex function; see the appendix. By \cite[Exercise 14.54]{rw98}, $h^\infty$ is a convex normal integrand.

We will study \eqref{P_u} in the conjugate duality framework of Rockafellar~\cite{roc74}. To this end, write it as
\[
\minimize\quad f(x,u)\quad \text{over}\quad x\in X,
\]
where
\begin{equation*}
f(x,u)=J_K(x,Dx+u)+k(x_0,x_{T+}).
\end{equation*}
Since $K$ is a convex normal integrand, we see that $f$ is well-defined on $X\times M$. The convexity of $K$ and $k$ implies the convexity of $f$ on $X\times M$, which in turn implies that the optimal value function
\begin{equation*}
\varphi(u)=\inf_{x\in X} f(x,u)
\end{equation*}
is convex on $M$; see e.g.\ \cite[Theorem~1]{roc74}.

The bilinear form
\[
\langle u,y\rangle := \int  y_t du_t
\]
puts $M$ in separating duality with the space $C$ of $\R^d$-valued continuous functions on $[0,T]$. Indeed, if we equip $C$ with the supremum norm, Riesz representation theorem says that $M$ may be identified with the Banach dual of $C$ through the representation $y\mapsto\langle u,y\rangle$; see e.g.\ \cite[Theorem~7.17]{fol99}. Similarly the bilinear form
\[
\langle x,v\rangle:=x_0\cdot v_{-1}+\int v_tdx_t
\]
puts $X$ in separating duality with the space $V:=\reals^d\times C$ of continuous functions on $\{-1\}\cup[0,T]$. The weak topology on $X$ will be denoted by $\sigma(X,V)$. We will make repeated use of the integration by parts formula
\[
\int v_tdx_t=x_{T+}\cdot v_T-x_0\cdot v_0 - \int x_tdv_t,
\]
which is valid for any $x\in X$ and any $v\in C$ of bounded variation; this can be deduced, e.g., from \cite[Theorem VI.90]{dm82} or Folland~\cite[Theorem 3.36]{fol99}.

The {\em Lagrangian} associated with \eqref{P_u} is the convex-concave function on $X\times Y$ defined by
\begin{equation*}
L(x,y)=\inf_{u\in M}\{f(x,u)-\langle u,y\rangle \}.
\end{equation*}
The conjugate of $\varphi$ can be expressed as
\begin{align*}
\varphi^*(y) &= \sup_{u\in M}\{\langle u,y\rangle -\varphi(u)\}\\
&= \sup_{u\in M, x\in X}\{\langle u,y\rangle -f(x,u)\}\\
&= -g(y),
\end{align*}
where
\[
g(y) := \inf_{x\in X} L(x,y).
\]
If $\varphi$ is closed (i.e.\ either proper and lower semicontinuous or a constant function), the biconjugate theorem (see e.g.\ \cite[Theorem~5]{roc74}) gives the dual representation
\begin{align*}
\varphi(u) = \sup_{y\in C}\{\langle u,y\rangle + g(y)\}.
\end{align*}
Clearly, $g(y)=-f^*(0,y)$, where $f^*$ is the conjugate of $f$. We always have
\[
f^*(v,y) =\sup_{x\in X}\{\langle x,v\rangle-L(x,y)\},
\]
and, as soon as $f$ is closed in $u$, 
\[
f(x,u) =\sup_{y\in Y}\{\langle u,y\rangle + L(x,y)\}.
\]

Our first goal is to derive a more concrete expression for $L$.  This will involve  the \emph{Hamiltonian} $H:\R^d\times\R^d\times[0,T]\to\ereals$ defined by
\begin{equation*}
H_t(x,y)=\inf_{u\in\R^d}\{K_t(x,u) - u\cdot y\}.
\end{equation*}
The Hamiltonian is convex in $x$ and concave in $y$. The function $t\mapsto H_t(x_t,y_t)$ is measurable for every $x\in X$ and $y\in C$. Indeed, by \cite[Proposition 14.45 and Theorem 14.50]{rw98}, $(y,t)\mapsto -H_t(x_t,y)$ is a normal integrand for every $x\in X$, so the measurability follows from that of $y$. The integral functional
\[
I_H(x,y)=\int   H_t(x_t,y_t)d\mu
\]
is thus well defined on $X\times C$. Again, we set $I_H(x,y)=+\infty$ unless the positive part of the integrand is integrable. The function $I_H$ is convex in $x$ and concave in $y$. The set $\dom H_t:=\dom_1 H_t\times\dom_2 H_t$ where
\begin{align*}
  \dom_1 H_t &=\{x\in\R^d\,|\, H_t(x,y)<+\infty\ \forall y\in\R^d \},\\
  \dom_2 H_t &=\{y\in\R^d\,|\, H_t(x,y)>-\infty\ \forall x\in\R^d\}
\end{align*}
is known as the {\em domain} of $H_t$. This set is nonempty for all $t$, because $K_t(\cdot,\cdot)$ is proper \cite[Theorem 34.2]{roc70a}. The domain of $I_H$ is defined similarly. 

Recall that a set-valued mapping $S$ from $[0,T]$ to $\reals^d$ is {\em inner semicontinuous} (isc) if the preimage of every open set is open; see \cite[Chapter~5]{rw98}. Following \cite{per12}, we define
\begin{align*}
(\mli S)_t=\{y\in\R^d\mid \forall\, A\in\mathcal H_y^o\ \exists O\in\mathcal H_t:\ \mu(S^{-1}(A)\cap O)=\mu(O)\},
\end{align*}
where $\mathcal H_y^o$ is the collection of open neighborhoods of $y\in\R^d$ and $\mathcal H_t$ is the collection of all neighborhoods of $t\in[0,T]$. A mapping $S$ is {\em outer $\mu$-regular} if $(\mli S)_t\subseteq\cl S_t$. If $S$ is outer $\mu$-regular, then we have that $y_t\in\cl S_t$ for all $t$ whenever $y\in C$ is such that $y_t\in \cl S_t$ $\mu$-almost everywhere; see \cite[Theorem 1]{per12}. By \cite[Theorem 2]{per12}, the converse implication holds when $S$ is isc convex-valued with $\interior S_t\neq\emptyset$ for all $t$. Outer $\mu$-regularity together with inner semicontinuity generalize the {\em full lower semicontinuity} condition used in \cite{roc72a,roc76b}. We denote the relative interior of a set $A$ by $\rinterior A$.

\begin{theorem}\label{thm:L}
Assume that 
\begin{enumerate}
\item [1.] $t\mapsto\dom_2 H_t$ is isc and outer $\mu$-regular,
\item [2.] $\{y\in C\mid y_t\in\rinterior\dom_2 H_t\,\forall t\}\subset \dom_2 I_H$,
\item [3.] For every $x\in \dom_1 I_H$ there exist $\alpha\in L^1$ and $w\in L^1$ with
\[
H_t(x_t,y)\le -y\cdot w_t+\alpha_t\ \mu\text{-a.e.,}
\]
\end{enumerate}
Then $f$ is closed in $u$, and the Lagrangian can be expressed as
\[
L(x,y) = \begin{cases}
I_H(x,y)+\langle Dx,y\rangle+k(x_0,x_{T+})\quad&\text{if } x\in\dom_1 I_H,\\
+\infty\quad&\text{otherwise}.
\end{cases}
\]
In particular $f$ is proper whenever $\dom_1 I_H\neq\emptyset$. Moreover, if
\begin{enumerate}
\item [4.]For every $y\in\dom_2 I_H$ there exist $\beta\in L^1$ and $z\in L^1$ with
\[
H_t(x,y_t)\ge x\cdot z_t-\beta_t\ \mu\text{-a.e.},
\] 
\end{enumerate}
then $f:X\times U\rightarrow\ereals$ is closed.
\end{theorem}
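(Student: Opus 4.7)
The approach is to transform the Lagrangian into a conjugate of $J_K(x,\cdot)$ via the substitution $\theta = Dx + u$. By bilinearity of the $(M,C)$-pairing, this gives
\[
L(x,y) = k(x_0, x_{T+}) + \langle Dx, y\rangle - [J_K(x,\cdot)]^*(y),
\]
where $[J_K(x,\cdot)]^*$ is the conjugate in the $(M,C)$-duality. The theorem then reduces to identifying this conjugate.

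The key identity I would establish is
\[
[J_K(x,\cdot)]^*(y) = -I_H(x,y)\qquad \forall\, x\in\dom_1 I_H,\ y\in C.
\]
This is a conjugation formula for a convex integral functional of Radon measures, which is precisely the framework developed in \cite{per12}. Pointwise, the conjugate of $u\mapsto K_t(x_t,u)$ is $y\mapsto -H_t(x_t,y)$; the recession term $K_t^\infty(0,\cdot)$ in the definition of $J_K$ is what makes the conjugate of the singular contribution the indicator of (the closure of) $\dom_2 H_t$, so that $y_t$ gets forced into $\cl\dom_2 H_t$ pointwise. Assumption~1 --- inner semicontinuity and outer $\mu$-regularity of $t\mapsto\dom_2 H_t$ --- is precisely the hypothesis of \cite[Theorem~1]{per12} that allows one to pass from $\mu$-a.e.\ membership $y_t\in\cl\dom_2 H_t$ (obtained pointwise from the conjugation) to membership at every $t$, and hence to interchange the conjugate with the integral; assumption~2 supplies continuous test functions valued in the relative interior of $\dom_2 H_t$, while assumption~3 is the integrable upper bound on $H_t(x_t, \cdot)$ that keeps $[J_K(x,\cdot)]^*$ proper on all of $C$. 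The complementary claim that $L(x,\cdot)\equiv+\infty$ for $x\notin\dom_1 I_H$ follows because such $x$ make $J_K(x,\theta)=+\infty$ for every $\theta\in M$; properness of $f$ under $\dom_1 I_H\neq\emptyset$ then follows by exhibiting one finite value using assumption~3.

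Closedness of $f$ in $u$ I would derive from the biconjugation theorem: this amounts to $f(x,u)=\sup_{y\in C}\{\langle u,y\rangle+L(x,y)\}$, and after substituting the formula for $L$ and undoing the change of variables, reduces to showing that $J_K(x,\cdot)$ coincides with its biconjugate in the $(M,C)$-duality --- i.e.\ is $\sigma(M,C)$-closed --- again via the same analysis of \cite{per12}. Joint closedness of $f$ under condition~4 I would get from the same dual representation: continuity of $\langle u,y\rangle$ in $u$ is automatic, $\langle Dx, y\rangle$ is $\sigma(X,V)$-continuous (since $y\in C\subset V$), $k(x_0, x_{T+})$ is $\sigma(X,V)$-lsc, and the only nontrivial ingredient is $\sigma(X,V)$-lsc of $x\mapsto I_H(x,y)$, which follows via a Fatou argument once condition~4 provides the integrable affine minorant $x\cdot z_t-\beta_t$ for $H_t(x,y_t)$.

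The main obstacle is the conjugation identity of the second paragraph. In the ``fully lsc'' setting of \cite{roc72a,roc76b} one can work with continuous selections into $\dom_2 H_t$ and deduce the formula from classical integration theory; under the weaker hypotheses 1-2, one must appeal to the measure-theoretic interchange machinery of \cite{per12} and carefully track the role of $K^\infty$ in absorbing the singular part of $\theta$. Verifying that conditions 1-3 translate precisely into the hypotheses of \cite[Theorems~1 and~2]{per12} is where the bulk of the technical work will lie.
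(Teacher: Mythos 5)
Your treatment of the Lagrangian formula and of closedness in $u$ is essentially the paper's: substitute $\theta=Dx+u$, reduce everything to the conjugacy $[J_K(x,\cdot)]^*=-I_H(x,\cdot)$ in the $(M,C)$-pairing, and obtain it from \cite[Theorem~3]{per12} after replacing $-H_t(x_t,\cdot)$ by $\delta_{\cl\dom_2 H_t}$ on the $\mu$-null set where $x_t\notin\dom_1 H_t$; conditions 1--3 enter exactly as you describe (the continuous selection of $\rinterior\dom_2 H_t$ whose existence condition 2 presupposes comes from Michael's selection theorem). The case $x\notin\dom_1 I_H$ and the properness claim are also handled as you propose.

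The joint closedness under condition 4 is where your plan has a genuine gap, in two places. First, you cannot simply take each term of the representation $f(x,u)=\sup_{y\in C}\{\langle Dx+u,y\rangle+I_H(x,y)\}+k(x_0,x_{T+})$ and argue it is lsc: for $y_t$ on the relative boundary of $\dom_2 H_t$ the function $H_t(\cdot,y_t)$ need not be lower semicontinuous (it is guaranteed lsc only for $y_t\in\rinterior\dom_2 H_t$, by \cite[Theorem~34.2]{roc70a}), so $(x,t)\mapsto H_t(x,y_t)$ need not be a normal integrand and $I_H(\cdot,y)$ need not be lsc for such $y$. The missing step is to show that the supremum may be restricted to $y$ with $y_t\in\rinterior\dom_2 H_t$ for \emph{all} $t$; the paper does this by using outer $\mu$-regularity to upgrade $y_t\in\cl\dom_2 H_t$ $\mu$-a.e.\ to everywhere and then sliding along the segment $y^\nu=\frac{1}{\nu}\bar y+(1-\frac{1}{\nu})y$ toward the interior selection $\bar y$, using concavity of $I_H(x,\cdot)$. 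Second, a ``Fatou argument'' only yields lower semicontinuity along $\mu$-a.e.\ convergent sequences, whereas the topology required is $\sigma(X,V)$, under which convergent nets have no pointwise convergence whatsoever. The correct tool is the weak lower semicontinuity theorem for convex integral functionals, \cite[Theorem~3C]{roc76}, which gives lsc of $I_H(\cdot,y)$ on $(L^\infty,\sigma(L^\infty,L^1))$ precisely under the integrable affine minorant of condition 4; one must then also check (the paper does this by integration by parts) that the embedding $(X,\sigma(X,V))\hookrightarrow(L^\infty,\sigma(L^\infty,L^1))$ is continuous. Without these two ingredients the closedness claim does not follow.
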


\begin{proof}
By definition, 
\begin{align*}
L(x,y) &= \inf_{u\in M}\{J_K(x,Dx+u)+k(x_0,x_{T+})-\langle u,y\rangle\}\\
&= \inf_{u\in M}\{J_K(x,u)-\langle u,y\rangle\} + \langle Dx,y\rangle + k(x_0,x_{T+}).
\end{align*}
Assume first that $x\notin\dom_1 I_H$ so that there is a $\tilde y\in C$ such that $I_H(x,\tilde y)=+\infty$. Since
\[
K_t(x,u) \ge H_t(x,\tilde y_t) + u\cdot \tilde y_t\quad \forall x,u\in\reals^d,
\]
we have $J_K(x,Dx+u)=+\infty$ for all $u\in M$, so $L(x,y)=+\infty$ and the given expression for the Lagrangian is valid.

Assume now that $x\in\dom_1 I_H$. We may redefine $x_0$ and $x_{T+}$ so that $k(x_0,x_{T+})<+\infty$. To justify the expression for $L$ and that $f$ is proper and closed in $u$, it suffices to show that the functions $J_K(x,\cdot)$ and $-I_H(x,\cdot)$ are proper and conjugate to each other. By condition 3, there is a Borel $\mu$-null set $N$ with $\{t\mid x_t\notin\dom_1 H_t\}\subseteq N$. Since $-H_t(x_t,\cdot)$ is a normal integrand and $t\mapsto\dom_2 H_t$ is measurable (see \cite[Exercise 14.9]{rw98}), it follows that
\[
h_t(y)=
\begin{cases}
  -H_t(x_t,y) & \text{if $t\notin N$},\\
  \delta_{\cl\dom_2 H_t}(y) &\text{if $t\in N$},
\end{cases}
\]
is a normal integrand and $I_h=-I_H(x,\cdot)$. Clearly, $h^*_t(u)=K_t(x_t,u)$ for all $t\notin N$. Since, by \cite[Theorem~34.2]{roc70a}, $\dom_2 H_t=\{y\mid \exists\ v:\ (v,y)\in\dom K_t^*\}$ we have, by \cite[Theorem 13.3]{roc70a}, that $(h^*_t)^\infty(u)=K^\infty_t(0,u)$ for all $t$ and thus,
\[
J_K(x,\theta) = \int h^*_t((d\theta/d\mu)_t)d\mu_t + \int (h^*_t)^\infty((d\theta^s/d|\theta^s|)_t)d|\theta^s|_t.
\]

By \cite[Theorem 34.3]{roc70a}, $\cl\dom h_t=\cl\dom_2 H_t$ for all $t$, so $t\mapsto\dom h_t$ is isc and outer $\mu$-regular. The mapping $t\mapsto\rinterior\dom h_t$ is also isc and convex-valued, so, by \cite[Theorem 3.1''']{mic56}, there is a $\bar y\in C$ with $\bar y_t\in\rinterior\dom h_t$ for all $t$. Thus, condition 2 implies that $\dom I_h\neq\emptyset$. By condition 3, $K_t(x_t,w_t)\le \alpha_t$ $\mu$-a.e., so, by choosing $d\theta/d\mu=w$ and $\theta^s=0$, we see that $J_K(x,\theta)<\infty$. Hence all the assumptions of \cite[Theorem 3]{per12} are met, so $J_K(x,\cdot)$ is a proper closed convex function with the conjugate $-I_H(x,\cdot)$. 
 
Assume now that condition 4 holds. It remains to show that $f$ is lower semicontinuous on $X\times M$. By the above,
\[
f(x,u)= \sup_{y\in C}\{\langle Dx+u,y\rangle +I_H(x,y)\} +k(x_0,x_{T+}).
\]
We start by showing that the supremum can be restricted to $y\in C$ with $y_t\in\rinterior\dom_2 H_t$ for all $t$. If $x_t$ does not belong to $\dom_1 H_t$ almost everywhere, then, by \cite[Theorem~34.3]{roc70a}, $H_t(x_t,\bar y_t)=+\infty$ on a set of positive measure, so $I_H(x,\bar y)=+\infty$. On the other hand, if $x_t\in\dom_1 H_t$ $\mu$-a.e.\ and if $I_H(x,y)>-\infty$, then, by \cite[Theorem 34.3]{roc70a}, $y_t\in\cl\dom_2 H_t$ $\mu$-a.e., so outer $\mu$-regularity implies that $y_t\in\cl\dom_2 H_t$ for all $t$. Defining $y^\nu=\frac{1}{\nu}\bar y+(1-\frac{1}{\nu})y$, we have $y^\nu_t\in\rinterior\dom_2 H_t$ for all $\nu$ and,  by concavity, $I_H(x,y^\nu)\ge \frac{1}{\nu}I_H(x,\bar y)+(1-\frac{1}{\nu})I_H(x,y)$.

When $y\in C$ with $y_t\in\rinterior\dom_2 H_t$ for all $t$, the function $H_t(\cdot,y_t)$ is lsc for all $t$ by \cite[Theorem 34.2]{roc70a}, so, by \cite[Proposition 14.47]{rw98}, $(x,t)\mapsto H_t(x,y_t)$ is a normal integrand. By \cite[Theorem~3C]{roc76}, condition 4 implies that $I_H(\cdot,y)$ is lsc on $(L^\infty,\sigma(L^\infty,L^1))$. 

To finish the proof, it suffices to show that the embedding $(X,\sigma(X,V))\hookrightarrow (L^\infty,\sigma(L^\infty,L^1))$ is continuous. Let $\bar w\in L^1$, $\varepsilon>0$ and $\bar z_t=\int_{[0,t]}\bar w_td\mu_t$. Integration by parts gives
\begin{align*}
\{x\in X\mid |\int x_t\cdot \bar w_td\mu_t|<\varepsilon\}
&=\{x\in X\mid |\int x_t d\bar z_t|<\varepsilon\}\\
&=\{x\in X\mid |\bar z_T\cdot x_{T+}-\int \bar z_tdx_t|<\varepsilon\},\\
&=\{x\in X\mid |\bar z_T\cdot x_0+\int (\bar z_T-\bar z_t)dx_t|<\varepsilon\}.
\end{align*}
Thus, since $\bar z$ is continuous, $\sigma(L^\infty,L^1)$-open sets are $\sigma(X,V)$-open.
\qquad\end{proof}

Conditions 1 and 2 in Theorem~\ref{thm:L} are needed to apply the results of \cite{per12} on convex conjugates of integral functionals. If $t\mapsto\dom_2 H_t$ is isc with $\interior\dom_2 H_t\neq\emptyset$ for all $t$, then, under conditions 2 and 3, outer $\mu$-regularity of $t\mapsto\dom_2 H_t$ is necessary for the conclusion of the theorem to hold. This follows by applying \cite[Theorem 3]{per12} to $I_h$ in the proof above.

We will next derive a more explicit expression for the conjugate of $f$. By \cite[Theorem 14.50]{rw98}, the function
\[
{\tilde K}_t(y,v)=K^*_t(v,y)=\sup_{\x,\u\in\R^d}\{x\cdot v + u\cdot y - K_t(x,u) \}.
\]
is a proper convex normal integrand. The functional 
\[
J_{\tilde K}(y,\theta)=\int   {\tilde K}_t(y_t,(d\theta^a/d\mu)_t)d\mu+\int  {\tilde K}^\infty_t(0, (d\theta^s/d|\theta^s|)_t)d|\theta^s|
\]
is thus well defined on $C\times M$. We also define
\[
\tilde k(\tilde a,\tilde b)=k^*(\tilde a,-\tilde b)=\sup_{a,b\in\R^d}\{a\cdot \tilde a - b\cdot \tilde b -k(a,b)\}.
\]

A function $x:\reals\to\reals^d$ is left-continuous if and only if it is continuous with respect to the topology $\tau_l$ generated by sets of the form $\{(s,t]\mid s<t\}$. We will say that a set-valued mapping $S$ is {\em left-inner semicontinuous} (or left-isc) if it is isc with respect to $\tau_l$. Similarly, $S$ is said to be {\em left-outer $\mu$-regular} if it is outer $\mu$-regular with respect to $\tau_l$. By \cite[Theorem 2]{per12}, a left-isc convex-valued mapping $S$ with $\interior\dom S_t\neq\emptyset$ for all $t$ is left-outer $\mu$-regular if and only if $x_t\in\cl S_t$ for all $t$ whenever $x$ is a left-continuous function with $x_t\in \cl S_t$ $\mu$-almost everywhere. We denote by $\mathbb B(x,r)$ the open ball with center $x\in\R^d$ and radius $r>0$.

\begin{theorem}\label{thm:pdo}
In addition to hypotheses of Theorem~\ref{thm:L}, assume that
\begin{enumerate}
\item $t\mapsto\dom_1 H_t$ is left-isc and left-outer $\mu$-regular,
\item $\emptyset\neq\{x\in X\mid \exists r>0:\, \mathbb B(x_t,r)\subset\dom_1 H_t\ \forall t\}\subset\dom_1 I_H$.
\end{enumerate}
Then
\[
f^*(v,y)=  
\begin{cases}
J_{\tilde K}(y,D\tilde v) + \tilde k(v_{-1}+\tilde v_0,\tilde v_T)\quad&\text{if $\tilde v \in C\cap X$},\\
+\infty\quad&\text{otherwise},
\end{cases}
\]
where $\tilde v_t=y_t-v_t$ for $t\in[0,T]$.
\end{theorem}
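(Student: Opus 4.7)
My plan is to compute the conjugate in a way that parallels Theorem~\ref{thm:L}, using integration by parts in the linear terms and then a second application of the Perkki\"o conjugacy theorem. Starting from the general identity $f^*(v,y)=\sup_{x\in X}\{\langle x,v\rangle-L(x,y)\}$ combined with the explicit formula for $L$ in Theorem~\ref{thm:L}, I obtain
\[
f^*(v,y)=\sup_{x\in\dom_1 I_H}\bigl\{\langle x,v\rangle-\langle Dx,y\rangle-I_H(x,y)-k(x_0,x_{T+})\bigr\}.
\]
When $\tilde v\in C\cap X$, integration by parts (applicable because $\tilde v$ is continuous and of bounded variation) rewrites the linear part as
\[
\langle x,v\rangle-\langle Dx,y\rangle = x_0\cdot(v_{-1}+\tilde v_0)-x_{T+}\cdot\tilde v_T+\int x_t\,d(D\tilde v)_t.
\]

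The next step is to decouple boundary and path variables. Since $\tilde v\in C$ the measure $D\tilde v$ is atomless; combined with atomlessness of $\mu$, this means the integrals $\int x_t\,d(D\tilde v)_t$ and $I_H(x,y)$ are insensitive to modifications of $x$ at the points $0$ and $T$. Parameterizing $X$ as $(x_0,\xi,x_{T+})$ where $\xi$ is the left-continuous BV trajectory on $(0,T)$, the atoms $(Dx)(\{0\})$ and $(Dx)(\{T\})$ of the derivative measure absorb the mismatches between $x_0,x_{T+}$ and the one-sided limits of $\xi$, so $x_0$ and $x_{T+}$ can be varied independently of $\xi$. The supremum therefore factors as
\[
f^*(v,y)=\sup_{a,b\in\R^d}\{a\cdot(v_{-1}+\tilde v_0)-b\cdot\tilde v_T-k(a,b)\}+\sup_\xi\{\langle D\tilde v,\xi\rangle-I_H(\xi,y)\},
\]
the first term being $\tilde k(v_{-1}+\tilde v_0,\tilde v_T)$ by definition. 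The identification of the second term as $J_{\tilde K}(y,D\tilde v)$ then follows from \cite[Theorem~3]{per12} applied to the convex normal integrand $h_t(x):=H_t(x,y_t)$, whose pointwise conjugate is $h^*_t(w)=\tilde K_t(y_t,w)$ and whose recession $(h^*_t)^\infty(w)=\tilde K^\infty_t(0,w)$ by \cite[Theorem~13.3]{roc70a}. Hypothesis~1 supplies the left-inner semicontinuity and left-outer $\mu$-regularity of $\dom h_t=\dom_1 H_t$, while hypothesis~2 supplies the requisite interior path; together these play for the primal domain exactly the role that hypotheses~1--3 of Theorem~\ref{thm:L} played for the dual domain, and the argument mirrors that proof step for step.

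When $\tilde v\notin C\cap X$, continuity of $v$ and $y$ forces $\tilde v$ to fail bounded variation, so $D\tilde v$ does not exist as a Radon measure. To conclude $f^*(v,y)=+\infty$, I would invoke the converse direction of \cite[Theorem~3]{per12}: the conjugate of $I_H(\cdot,y)$ on the space of left-continuous BV paths is finite only at test functions represented by measures on $[0,T]$, so a non-BV $\tilde v$ is outside its effective domain. Concretely, starting from the interior path $\bar x$ of hypothesis~2, bounded perturbations $\bar x+\xi$ with $\|\xi\|_\infty\le r/2$ remain in $\dom_1 I_H$ with $I_H(\bar x+\xi,y)$ uniformly controlled by convexity of $H_t(\cdot,y_t)$ on the interior of its domain, while choosing the derivative measures $D\xi$ to pair badly with $\tilde v$ drives $-\int\tilde v_t\,d(D\xi)_t$ to $+\infty$ exactly when $\tilde v$ has unbounded variation. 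I expect this case to be the main obstacle, together with the rigorous justification of the decoupling in the first case, since both steps rely on exploiting the interaction between the interior-path hypothesis and the atom structure of the measures $\mu$ and $D\tilde v$.
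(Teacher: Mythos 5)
Your route is the paper's route: the same starting identity $f^*(v,y)=\sup_x\{\langle x,v\rangle-L(x,y)\}$ with the Lagrangian from Theorem~\ref{thm:L}, the same integration by parts once $\tilde v$ is known to be of bounded variation, the same decoupling of the endpoint supremum (giving $\tilde k$) from the trajectory supremum, the same identification of the latter via the Perkki\"o conjugacy machinery, and the same total-variation argument (bounded perturbations $\bar x+\xi$ of the interior path with $D\xi$ pairing badly with $\tilde v$, as in \cite[Proposition~3.6]{afp00}) to force $f^*(v,y)=+\infty$ when $\tilde v$ is not BV. Two technical steps are missing, however. First, you never treat the case $y\notin\dom_2 I_H$. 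There the Lagrangian equals $-\infty$ at some $\tilde x$, so $f^*(v,y)=+\infty$ trivially, but one must also check that the claimed right-hand side is $+\infty$ when $\tilde v\in C\cap X$; the paper does this via the pointwise inequality $\tilde K_t(y,v)\ge \tilde x_t\cdot v-H_t(\tilde x_t,y)$, which forces $J_{\tilde K}(y,D\tilde v)=+\infty$. Moreover, your quantitative steps (the uniform bound on $I_H(\bar x+\xi,y)$ from \cite[Theorem~2]{roc71} and the application of the conjugacy theorem) presuppose that $I_H(\cdot,y)$ is proper, i.e.\ $y\in\dom_2 I_H$, so this case split is not optional.

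Second, $h_t(x):=H_t(x,y_t)$ is in general \emph{not} a normal integrand: $H_t(\cdot,y_t)$ is convex but need not be lower semicontinuous in $x$, so the conjugacy theorem of \cite{per12} cannot be applied to it directly. The paper first restricts the supremum to trajectories with $x_t\in\interior\dom_1 H_t$ for all $t$ (by the same dilation-towards-the-interior-path argument used in the proof of Theorem~\ref{thm:L}) and then replaces $H_t$ by its closure $\bunderline H_t$ with respect to $x$ using \cite[Corollary~34.2.1]{roc70a}; it is $\bunderline H_t(\cdot,y_t)$ that is a normal integrand with conjugate $\tilde K_t(y_t,\cdot)$. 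Relatedly, the result needed at this point is the conjugacy theorem for integral functionals over left-continuous functions of bounded variation, \cite[Theorem~4]{per12}, not \cite[Theorem~3]{per12}, which concerns the pairing of $C$ with $M$; your own invocation of left-inner semicontinuity and left-outer $\mu$-regularity signals that the BV version is what you want. None of these gaps requires a new idea, but as written the proof does not close.
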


\begin{proof}
By Theorem~\ref{thm:L},
\begin{align*}
f^*(v,y) &= \sup_{x \in X}\{\langle x,v\rangle - L(x,y)\}\\
&= \sup_{x \in X}\{\langle x,v\rangle - I_H(x,y)-\langle Dx,y\rangle-k(x_0,x_{T+})\}\\
&= \sup_{x \in X}\{x_0\cdot v_{-1}-I_H(x,y)-\langle Dx,\tilde v\rangle-k(x_0,x_{T+})\}.
\end{align*}

Assume first that $y\notin\dom_2 I_H$ so that there is an $\tilde x\in X$ such that $I_H(\tilde x,y)=-\infty$. Since $I_H(\tilde x,y)$ is independent of the end points of $\tilde x$, we get $f^*(v,y)=+\infty$. The expression for $f^*$ then clearly holds if $\tilde v \notin C\cap X$. Since $\tilde K_t(y,v)\ge\tilde x_t\cdot v-H_t(\tilde x_t,y)$, we have $J_{\tilde K}(y,D\tilde v)=+\infty$, so the expression is valid also for $\tilde v\in C\cap X$. We may thus assume that $y\in\dom_2 I_H$.

Let $\bar x$ belong to the set in condition 2. Redefining $\bar x_0$ and $\bar x_{T+}$, we may assume that $k(\bar x_0,\bar x_{T+})<\infty$. Since $y\in\dom_2 I_H$, we have that $I_H(\cdot,y)$ is proper on $X$. In view of condition 2, \cite[Theorem 2]{roc71} implies that there is an $r>0$ and an $\alpha\in\R$ such that $I_H(\bar x+x,y)\leq\alpha$ whenever $x\in X$ with $x_t\in\uball(0,r)$ for all $t$. Therefore,

\begin{align*}
f^*(v,y)&\ge\sup_{x\in C^1_c}\{\bar x_0\cdot v_{-1}-I_H(\bar x+x,y)- \langle D(\bar x+x),\tilde v\rangle- k(\bar x_0,\bar x_{T+}) \,|\, x_t\in\uball(0,r)\, \forall t\}\\
&\ge \bar x_0\cdot v_{-1} - \alpha - \langle D\bar x,\tilde v\rangle  - k(\bar x_0,\bar x_{T+}) + \sup_{x\in C^1_c}\{-\langle Dx,\tilde v\rangle\mid x_t\in\uball(0,r)\ \forall t\},
\end{align*}
where $C^1_c$ is the set of continuously differentiable $\R^d$-valued functions with compact support in $(0,T)$. By \cite[Proposition 3.6]{afp00}, the last supremum equals the total variation of $\tilde v$ on $(0,T)$ and consequently  $f^{*}(v,y)=+\infty$ unless $\tilde v$ is of bounded variation on $[0,T]$. When $\tilde v$ is of bounded variation, integration by parts gives
\begin{align*}
  f^*(v,y) &= \sup_{x\in X}\{x_0\cdot v_{-1} - I_H(x,y)-\int \tilde v_tdx_t-k(x_0,x_{T+})\}\\
&= \sup_{x\in X}\{\int x_td\tilde v_t - I_H(x,y) + x_0\cdot (v_{-1}+\tilde v_0) - x_{T+}\cdot \tilde v_T - k(x_0,x_{T+})\}\\
&= \sup_{x\in X}\{\int x_td\tilde v_t-I_H(x,y)\} + \sup_{x\in X}\{x_0\cdot (v_{-1}+\tilde v_0) - x_{T+}\cdot \tilde v_T - k(x_0,x_{T+})\}\\
&= \sup_{x\in X}\{\int x_td\tilde v_t-I_H(x,y)\} + \tilde k(v_{-1}+\tilde v_0,\tilde v_T).
\end{align*}
Analogously to the proof of Theorem~\ref{thm:L}, we can restrict the supremum to the set $\{x\in X\mid x_t\in\interior\dom_1 H_t\ \forall t\}$. Thus, by \cite[Corollary 34.2.1]{roc70a},
\[
f^*(v,y) = \sup_{x\in X}\{\int x_td\tilde v_t-I_{\bunderline H}(x,y)\} + \tilde k(v_{-1}+\tilde v_0,\tilde v_T),
\]
where $\bunderline H_t$ denotes the closure of $H_t$ with respect to $x$. The rest of the proof is analogous to the proof of Theorem~\ref{thm:L} except that instead of \cite[Theorem~3]{per12} we use \cite[Theorem~4]{per12} on integral functionals of left-continuous functions of bounded variation.
\qquad\end{proof}

\section{A closedness criterion}\label{sec:cc}

This section gives sufficient conditions for the closedness of $\varphi$ by applying general results on the conjugate duality framework derived in the appendix. To this end, we write $\varphi$ as
\[
\varphi(u)=\inf_{a\in\R^d}\varphi_0(a,u),
\]
where
\[
\varphi_0(a,u)=\inf_{x\in X}\{f(x,u)\mid x_0=a\}.
\]
We will proceed in two steps by first giving conditions for closedness of $\varphi_0$. The function $\varphi_0$ describes the dependence of the optimal value on $u\in M$ as well as on the initial state much like the cost-to-go function in the Hamilton-Jacobi theory of optimal control; see e.g.\ \cite{rw00b,gr02}. Once the closedness of $\varphi_0$ has been established, we can apply the classical recession criterion from finite-dimensional convex analysis to verify the closedness of $\varphi$.

The bilinear form
\[
\langle (a,u),(\tilde a,y)\rangle=a\cdot \tilde a+\langle u,y\rangle
\]
puts the space $\R^d\times M$ in separating duality with $\R^d\times C$. The following result establishes the lower semicontinuity of $\varphi_0$ with respect to the corresponding weak topology. The proof relies on regularity properties of differential equations much like the proof of \cite[Theorem 3']{roc76b}. We use the same interiority condition but we relax the continuity assumptions on the domain of the Hamiltonian.

\begin{theorem}\label{thm:ctg}
In addition to the hypotheses of Theorem~\ref{thm:pdo}, assume that there exists $\bar y\in\dom g\cap AC$ with $\bar y_t\in\interior\dom_2 H_t$ for all $t$. Then $\varphi_0$ is closed, the infimum in the definition of $\varphi_0$ is attained for every $(a,u)$ and
\[
\varphi_0^\infty(a,u)=\inf_{x\in X}\{f^\infty(x,u)\mid x_0=a\}.
\]
\end{theorem}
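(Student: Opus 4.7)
The plan is to view $\varphi_0$ as the value function of a parametric convex problem on $\R^d\times M$ paired with $\R^d\times C$ and to apply the general closedness-plus-attainment criterion proved in the appendix. The first step is to compute $\varphi_0^*$ and verify a dual Slater-type condition. Writing $v_{\tilde a}\in V$ for the element with $(v_{\tilde a})_{-1}=\tilde a$ and $(v_{\tilde a})_t=0$ for $t\in[0,T]$, a direct computation gives
\[
\varphi_0^*(\tilde a,y)=\sup_{x\in X,\,u\in M}\{x_0\cdot\tilde a+\langle u,y\rangle-f(x,u)\}=f^*(v_{\tilde a},y),
\]
so Theorem~\ref{thm:pdo} yields $\varphi_0^*(\tilde a,y)=J_{\tilde K}(y,Dy)+\tilde k(\tilde a+y_0,y_T)$ when $y\in C\cap X$ and $+\infty$ otherwise. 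Since $\bar y\in AC\subset C\cap X$ lies in $\dom g$, the point $(0,\bar y)$ belongs to $\dom\varphi_0^*$, and the pointwise interiority $\bar y_t\in\interior\dom_2 H_t$ for all $t$ is the dual interiority hypothesis required by the appendix.

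The technical core is a coercivity estimate on the sublevel sets $\{x\in X\mid x_0=a,\ f(x,u)\le c\}$. Fenchel's inequality combined with the formula for $L$ from Theorem~\ref{thm:L} gives
\[
f(x,u)\ge\langle u,\bar y\rangle+I_H(x,\bar y)+\langle Dx,\bar y\rangle+k(x_0,x_{T+})
\]
on $\dom_1 I_H$. Because $\bar y\in AC$, integration by parts rewrites $\langle Dx,\bar y\rangle$ as $x_{T+}\cdot\bar y_T-x_0\cdot\bar y_0-\int x_t\dot{\bar y}_t\,d\mu_t$, and Condition~4 of Theorem~\ref{thm:L}, applicable since $\bar y\in\dom_2 I_H$, bounds $H_t(\cdot,\bar y_t)$ from below by an affine map with $L^1$ coefficients. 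Combining these, and playing the perturbations $\bar y\pm\varepsilon e_i$ (which remain dual-feasible by the interiority) against each coordinate of $Dx$, one obtains a uniform bound on the total variation $|Dx|$ over the sublevel set, hence with $x_0=a$ fixed an a priori bound in $X$. This parallels the differential-equation regularity step in \cite{roc76b}: the interior dual arc $\bar y$ acts as an adjoint whose regularity furnishes a Gronwall-type estimate for the primal arc.

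Coercivity gives relative $\sigma(X,V)$-compactness of the sublevel sets via Banach--Alaoglu on $M\cong C^*$, while Theorem~\ref{thm:L} supplies the required lower semicontinuity of $f$. Standard arguments then deliver $\sigma(\R^d\times M,\R^d\times C)$-closedness of $\varphi_0$ together with attainment of the infimum whenever $\varphi_0(a,u)<+\infty$. The recession formula follows from the appendix lemma that, under attainment, the recession of an infimal projection coincides with the infimal projection of the recession, applied after noting that the hypotheses transfer to $f^\infty$ with the same witness $\bar y$.

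The step I expect to be the main obstacle is the coercivity estimate. Converting pointwise affine lower bounds on $H_t(\cdot,\bar y_t)$ together with the endpoint contributions $x_0\cdot\bar y_0$ and $x_{T+}\cdot\bar y_T$ into a bound on the total variation of $Dx$ is delicate; the absolute continuity of $\bar y$ is essential to keep the integration by parts free of singular contributions, and the \emph{strict} interiority $\bar y_t\in\interior\dom_2 H_t$ (rather than merely the relative interior that suffices in Theorem~\ref{thm:L}) is what allows perturbations in every coordinate direction, and thus a separate bound on each signed component of $Dx$.
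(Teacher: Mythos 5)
There is a genuine gap at the step you yourself flag as the main obstacle, and as written it does not just need care --- it fails. Testing the Fenchel inequality with the constant perturbations $\bar y\pm\varepsilon e_i$ produces the terms $\pm\varepsilon\langle Dx,e_i\rangle=\pm\varepsilon(x_{i,T+}-x_{i,0})$, i.e.\ a bound on the \emph{net displacement} of each coordinate of $x$, not on the total variation $|Dx|([0,T])$: an arc that oscillates wildly but returns to its starting value is invisible to constant test functions. To control $|Dx|$ you must test against all $y=\bar y+\varepsilon\psi$ with $\psi\in C$, $\|\psi\|\le 1$, and then the affine minorants $H_t(x,\bar y_t+\varepsilon\psi_t)\ge x\cdot z^\psi_t-\beta^\psi_t$ supplied by condition~4 of Theorem~\ref{thm:L} have coefficients depending on $\psi$; making them uniform is precisely the point of the interpolation $z^i_t=A_t(\bar y_t+v^i)+b_t$ over a simplex $\{v^0,\dots,v^d\}$ spanning a ball inside $\interior\dom_2 H_t$ around $\bar y_t$, which is the technical heart of the matter and absent from your sketch. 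Even with that, the resulting inequality couples $\|Dx\|$ to $\|x\|_\infty$ through terms like $\int x_t\cdot A_t\psi_t\,d\mu_t$ with $A\in L^1$ of uncontrolled size, so closing the estimate is not a routine step.

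The reduction to the appendix is also misstated. Theorem~\ref{thm:dvf} has no ``dual interiority hypothesis'' that is satisfied by exhibiting a single point $(0,\bar y)\in\dom\varphi_0^*$; its hypothesis is that $v\mapsto\inf_{(\tilde a,y)}\{f_0^*(v,(\tilde a,y))-\langle(a,u),(\tilde a,y)\rangle\}$ be bounded above on a neighborhood of $v=0$, a statement about \emph{perturbations} of the dual problem. The paper verifies it by solving, for each $v$ near the origin, a terminal-value problem $dy^v_t=F_t(y^v_t+v_t)d\mu_t$, $y^v_T=\bar y_T$ (Lemma~\ref{app:ody}), with $F$ built from the simplex above, and showing that $\int\tilde K_t(y^v_t+v_t,\dot y^v_t)d\mu_t$ stays bounded for $\|v\|<\delta$; closedness, attainment and the recession formula then all come from Theorem~\ref{thm:dvf} at once. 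There is no separate ``appendix lemma'' yielding the recession identity under attainment alone --- that identity is proved inside Theorem~\ref{thm:dvf} using the inf-compactness its hypothesis provides. Finally, your alternative primal route (coercivity plus lower semicontinuity of $f$) would also have to confront the non-metrizability of $\sigma(M,C)$: weak* convergent nets need not be norm bounded, so a sublevel-set bound depending on $\|u\|$ does not directly give lower semicontinuity of the infimal projection without, say, a Krein--\v{S}mulian reduction to bounded sets. None of these repairs is impossible, but each is a missing piece rather than a detail, and together they amount to reconstructing the argument the paper actually gives.
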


\begin{proof}
Note that $\varphi_0$ is the value function associated with $f_0:X\times(\R^d\times M)\rightarrow\ereals$ defined by
\begin{align*}
  f_0(x,(a,u)) &=f(x,u)+\delta_0(x_0-a).
\end{align*}
By Theorem~\ref{thm:dvf} below, it suffices to show that
\begin{equation}\label{eq:dvf}
  v\mapsto \inf_{(\tilde a,y)}\{f^*_0 (v,(\tilde a,y))-\langle (a,u),(\tilde a,y)\rangle\}
\end{equation}
is bounded above in a neighborhood of the origin for all $(a,u)$. 

The Lagrangian $L_0$ associated with $f_0$ can be written as
\[
L_0(x,(\tilde a,y))=L(x,y)-x_0\cdot \tilde a,
\]
so $f_0^*(v,(\tilde a,y))=f^*((v_{-1}+\tilde a,v|_{[0,T]}),y)$. By Theorem~\ref{thm:pdo},
\[
f_0^*(v,(\tilde a,y))=J_{\tilde K}(y,D(y-v|_{[0,T]}))+\tilde k(v_{-1}+\tilde a + y_0-v_0, y_T-v_T).
\]
It suffices to establish the existence of a continuous function $v\mapsto (\tilde a^v,y^v)$ from $V$ to $\R^d\times C$ such that $y^v\in AC$, $y^0=\bar y$, $y^v_T=\bar y_T$, $y^v_0+\tilde a^v=\bar y_0$, and such that the function
\begin{equation}\label{eq:bound}
v\mapsto \int  {\tilde K}_t(y^v+v_t,\dot y^v_t)d\mu_t
\end{equation}
is bounded above in a neighborhood of the origin. Indeed, we will then have
\begin{align*}
  &\inf_{(\tilde a,y)}\{f_0^*(v,(\tilde a,y))-\langle (a,u),(\tilde a,y)\rangle\}\\
  &=\inf_{\tilde a,y\in C\cap X}\{J_{\tilde K}(y+v,Dy)+\tilde k(y_0+\tilde a,y_T)-\langle (a,u),(\tilde a-v_{-1},y+v|_{[0,T]})\rangle\}\\
  &\le J_{\tilde K}(y^v+v,Dy^v)+\tilde k(y^v_0+\tilde a^v,y^v_T)-\langle (a,u),(\tilde a^v-v_{-1},y^v+v|_{[0,T]})\rangle\}
\end{align*}
so that \eqref{eq:dvf} is bounded from above on a neighborhood of the origin.

By \cite[Lemma~2]{roc71}, there is an $\bar r>0$ such that $\mathbb B(\bar y_t,\bar r)\subset\dom_2 H_t$ for all $t$; see \cite[p.~460]{roc71}. We can then choose $v^i\in \R^d$, $i=0,\dots,d$ and an $r>0$ such that $|v^i|<\bar r$ and $\mathbb B(0,r)$ belongs to the interior of the convex hull of $\{v^i\mid i=0,\dots, d\}$. Having assumed the hypotheses of Theorem~\ref{thm:pdo}, conditions 2 and 4 of Theorem~\ref{thm:L} give the existence of functions $z^i\in L^1$ and nonnegative $\beta^i\in L^1$ such that
\[
H_t(x,\bar y_t+v^i)-x\cdot z^i_t\ge -\beta^i_t.
\]
Taking infimum over $x\in\R^d$ gives
\begin{equation}\label{eq:ctg1}
{\tilde K}_t(\bar y_t+v^i,z^i_t)d\mu_t\le\beta^i_t.
\end{equation}
Let $Z_t=[z_t^1-z_t^0\ \dots\ z_t^d-z_t^0]$ and $W=[v^1-v^0\ \dots \ v^d-v^0]$. Then $W$ is nonsingular and
\begin{equation}
	z^i_t=A_t (\bar y_t+v^i)+b_t,\quad i=0,\dots,d,\label{eq:A}
\end{equation}
where $A_t= Z_t W^{-1}$ and $b_t= z^0_t- Z_t W^{-1}(\bar y_t+v^0)$. Moreover, the integrability of $z^i$ and boundedness of $\bar y$ imply that $t\mapsto A_t$ and $t\mapsto b_t$ belong to $L^1$. By Lemma~\ref{app:ody} below, there is a $y^v\in AC$ such that 
\begin{equation*}
  dy^v_t=F_t(y^v_t+v_t)d\mu_t, \quad y^v_T=\bar y_T,
\end{equation*}
where
\[
F_t(y)=\begin{cases}
\dot{\bar y}_t&\text{if } y=\bar y_t\\
(1-\frac{|y-\bar y_t|}{r})\dot{\bar y}_t+\frac{|y-\bar y_t|}{r}[A_t (\bar y_t+r\frac{y-\bar y_t}{|y-\bar y_t|})+b_t]&\text{otherwise};
\end{cases}
\]
moreover, $v\mapsto (\tilde a^v,y^v)$, where $\tilde a^v=\bar y_0-y^v_0$, is a continuous transformation from $V$ to $\R^d\times C$.  We have that $y^0=\bar y$, $y^v_T=\bar y_T$ and $y^v_0+\tilde a^v=\bar y_0$ for all $v$. Next we establish that \eqref{eq:bound} is bounded above in a neighborhood of the origin which will finish the proof.

Since $y^0=\bar y$, there is a $\delta>0$ such that $\|y^v+v|_{[0,T]}-\bar y\|<r$ whenever $\|v\|<\delta$. Denoting $\alpha_t=\frac{|y^v_t+v_t-\bar y_t|}{r}$ and $w_t=\alpha_t^{-1}(y^v_t+v_t-\bar y_t)$, we have that $y^v_t+v_t=(1-\alpha_t)\bar y_t+\alpha_t(\bar y_t+w_t)$ and consequently, by the definition of $F$,
\begin{align*}
	{\tilde K}_t(y^v_t+v_t,\dot y^v_t) &={\tilde K}_t(y^v_t+v_t,F_t(y^v_t+v_t))\\
&\le (1-\alpha_t){\tilde K}_t(\bar y_t,\dot{\bar y}_t)+\alpha_t {\tilde K}_t(\bar y_t+w_t,A_t(\bar y_t+w_t)+b_t).
\end{align*}
The function $w$ can be expressed as $w_t=\sum_{i=0}^d \alpha^i_t v^i_t$, where $\alpha^i:[0,T]\rightarrow \R$ are measurable with $\sum_{i=0}^d \alpha^i_t=1$.  Since $|w_t|<r$ for all $t$, we have $0\leq\alpha^i_t \leq 1$ for all $t$, so, by \eqref{eq:A} and \eqref{eq:ctg1}, 
\begin{align*}
	{\tilde K}_t(\bar y_t+w_t,A_t(\bar y_t+w_t)+b_t)&\le \sum_{i=0}^d \alpha^i_t {\tilde K}_t(\bar y_t+v^i,z^i_t)\le\sum_{i=0}^d \alpha^i_t \beta^i_t.
\end{align*}
We define $\beta_t=\max\{0,{\tilde K}_t(\bar y_t,\dot{\bar y}_t)\}+\sum_{i=0}^d \alpha^i_t \beta^i_t$ so that $\beta\in L^1$ and
\begin{align*}
  \int  {\tilde K}_t(y^v_t+v_t,\dot y^v_t)d\mu_t\le\int \beta_t d\mu_t
\end{align*}
whenever  $\|v\|<\delta$.
\qquad\end{proof}

Combining Theorem~\ref{thm:ctg} with the classical recession condition gives sufficient conditions for the closedness of $\varphi$.

\begin{theorem}\label{thm:cc}
In addition to the hypotheses of Theorem~\ref{thm:ctg}, assume that
\[
\{x\in X\mid f^\infty(x,0)\le 0\}
\]
is a linear space. Then $\varphi$ is closed and the infimum in \eqref{P_u} is attained for every $u\in M$.
\end{theorem}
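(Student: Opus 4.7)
The plan is to express $\varphi$ as the marginal of $\varphi_0$ in the finite-dimensional variable $a\in\R^d$ and to close the infimum using the classical recession criterion from finite-dimensional convex analysis. Theorem~\ref{thm:ctg} already provides the structural ingredients: $\varphi_0$ is closed proper convex on $\R^d\times M$, the infimum in its definition is attained for every $(a,u)$, and the recession formula
\[
\varphi_0^\infty(a,u)=\inf_{x\in X}\{f^\infty(x,u)\mid x_0=a\}
\]
holds. Since we are now projecting out only the finite-dimensional initial value, the standard finite-dimensional recession results apply; the only remaining task is to verify that the recession condition for partial minimization over $a\in\R^d$ is satisfied.

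The decisive step is therefore to check that the set $K=\{a\in\R^d\mid\varphi_0^\infty(a,0)\le 0\}$ is a linear subspace. Let $L=\{x\in X\mid f^\infty(x,0)\le 0\}$ and write $\pi:X\to\R^d$ for the continuous linear map $x\mapsto x_0$. By hypothesis $L$ is a linear subspace, and sublinearity of $f^\infty(\cdot,0)$ together with $f^\infty(0,0)=0$ forces $f^\infty\equiv 0$ on $L$; hence $\pi(L)\subset K$ is a linear subspace of $\R^d$. For the reverse inclusion, fix $(\bar a,\bar u)\in\dom\varphi_0$, let $a\in K$, and use the attainment asserted by Theorem~\ref{thm:ctg} to obtain minimizers $x^n\in X$ with $x^n_0=\bar a+na$ and $f(x^n,\bar u)=\varphi_0(\bar a+na,\bar u)\le\varphi_0(\bar a,\bar u)$, the last bound coming from the recession inequality $\varphi_0(\bar a+na,\bar u)\le\varphi_0(\bar a,\bar u)+n\,\varphi_0^\infty(a,0)$. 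A recession argument applied to the sequence $x^n/n$ (whose initial values converge to $a$ and which lies in a fixed sublevel set of $f(\cdot,\bar u)$) then produces a direction in $L$ whose initial value is $a$, giving $a\in\pi(L)$ and hence $K=\pi(L)$.

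Once $K$ is known to be a linear subspace, the classical finite-dimensional partial minimization/recession criterion applied to the closed proper convex function $\varphi_0$, with $a\in\R^d$ the variable being minimized out, yields that $\varphi(u)=\inf_a\varphi_0(a,u)$ is closed proper convex on $M$ and that the outer infimum is attained whenever $\varphi(u)<+\infty$. Composing the outer minimizer $a^*$ with the inner minimizer $x^*\in X$, $x^*_0=a^*$, provided by Theorem~\ref{thm:ctg} yields an $x^*\in X$ with $f(x^*,u)=\varphi_0(a^*,u)=\varphi(u)$, i.e.\ attainment in \eqref{P_u}. The principal technical obstacle is the middle step, namely converting the infinite-dimensional sequence $x^n$ into an honest recession direction in $L$ with prescribed initial value, since the recession formula from Theorem~\ref{thm:ctg} only provides this at the level of infima; everything outside this step is a transparent reduction to standard finite-dimensional convex analysis on $\R^d$.
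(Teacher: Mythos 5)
Your proposal follows the paper's proof essentially verbatim: write $\varphi=\inf_{a}\varphi_0(a,\cdot)$, use Theorem~\ref{thm:ctg} for the closedness, attainment and recession formula for $\varphi_0$, identify $\{a\mid\varphi_0^\infty(a,0)\le0\}$ with the image of $\{x\mid f^\infty(x,0)\le0\}$ under $x\mapsto x_0$, and finish with the finite-dimensional recession criterion (Corollary~\ref{cor:dvf}). The ``principal technical obstacle'' you flag --- upgrading $\inf\{f^\infty(x,0)\mid x_0=a\}\le0$ to an honest element of the recession set with initial value $a$ --- is already discharged by the inf-compactness built into the proof of Theorem~\ref{thm:dvf} (the sublevel sets of $f^\infty(\cdot,0)+\delta_0(x_0-a)$ are nested intersections of compacta, so the infimum in the recession formula is attained), which is why the paper simply reads the set equality off Theorem~\ref{thm:ctg} rather than running your sequential $x^n/n$ argument.
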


\begin{proof}
By Theorem~\ref{thm:ctg},
\begin{align*}
  \{a\in\R^d\mid \varphi_0^\infty(a,0)\le 0\}	&= \{a\in\R^d\mid \exists x\in X:\, f^\infty(x,0)\le 0,\, x_0=a\},
\end{align*}
which is linear when $\{x\in X\mid f^\infty(x,0)\le 0\}$ is linear. Since
\[
\varphi(u)=\inf_{a\in\R^d}\varphi_0(a,u),
\]
the claim follows from Theorem~\ref{thm:ctg} and Corollary~\ref{cor:dvf} below.
\qquad\end{proof}

The linearity condition in Theorem~\ref{thm:cc} is analogous to the condition 
\[
	\{y\in AC\mid \int K^\infty_t(y_t,\dot y_t)d\mu_t+k^\infty(y_0,y_T)\le 0\}
\]
in \cite[Theorem 3]{roc72a}. Indeed, the recession function $f^\infty$ can be expressed in terms of $K^\infty$ and $k^\infty$ as follows.

\begin{lemma}\label{lem:recf}
Assume that $f$ is proper and closed and that there exist $z\in L^1$, $y\in L^\infty$ and $\beta\in L^1$ such that
\[
	K_t(x,u)\ge x\cdot z_t+u\cdot y_t-\beta_t\quad\mu\text{-a.e.}
\]
Then
\[
	f^\infty(x,u)=J_{K^\infty}(x,Dx+u)+k^\infty(x_0,x_{T+}).
\]
\end{lemma}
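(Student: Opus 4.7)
Since $f$ is proper and closed, its recession function admits the representation
\[
f^\infty(x,u)=\lim_{\lambda\to\infty}\frac{f(\bar x+\lambda x,\bar u+\lambda u)-f(\bar x,\bar u)}{\lambda},
\]
valid for any $(\bar x,\bar u)\in\dom f$, with the quotient nondecreasing in $\lambda$ (see \cite[Theorems~8.5 and~8.7]{roc70a}). The endpoint term contributes $k^\infty(x_0,x_{T+})$ directly by the finite-dimensional result, so the task reduces to proving $J_K^\infty(x,\theta)=J_{K^\infty}(x,\theta)$ for $\theta=Dx+u$.

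The first step is to subtract the linear minorant. Setting $\tilde K_t(x,u):=K_t(x,u)-x\cdot z_t-u\cdot y_t+\beta_t\ge 0$ defines a nonnegative normal integrand with $\tilde K_t^\infty(x,u)=K_t^\infty(x,u)-x\cdot z_t-u\cdot y_t$ and, in particular, $\tilde K_t^\infty(0,\cdot)\ge 0$. Since $y\in L^\infty$, $\beta\in L^1$, and $d\theta^a/d\mu$ is $\mu$-integrable, the linear correction contributes to $J_K$ an affine term $\int x_t\cdot z_t\,d\mu_t+\int y_t\,d\theta_t-\int\beta_t\,d\mu_t$ whose recession (in the variables $x,\theta$) is the linear part alone; the identical affine term appears in $J_{K^\infty}$. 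Hence it suffices to establish the identity for $\tilde K$, and we may assume $K_t\ge 0$ throughout.

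For the absolutely continuous part, with $\bar a=d\bar\theta^a/d\mu$ and $a=d\theta^a/d\mu$ where $\bar\theta=D\bar x+\bar u$, convexity makes
\[
\lambda\mapsto\bigl[K_t(\bar x_t+\lambda x_t,\bar a_t+\lambda a_t)-K_t(\bar x_t,\bar a_t)\bigr]/\lambda
\]
nondecreasing in $\lambda$ with pointwise limit $K_t^\infty(x_t,a_t)$ by \cite[Theorem~8.5]{roc70a}; monotone convergence then yields the a.c.\ part of $J_{K^\infty}(x,\theta)$. For the singular part, pick a common dominating measure $\rho:=|\bar\theta^s|+|\theta^s|$ and write $\bar p=d\bar\theta^s/d\rho$, $p=d\theta^s/d\rho$. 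Positive homogeneity of $K_t^\infty(0,\cdot)$ makes the singular integral invariant under the choice of dominating measure, so the singular difference quotient may be written as
\[
\int K_t^\infty\bigl(0,\,p_t+\lambda^{-1}\bar p_t\bigr)\,d\rho_t-\lambda^{-1}\int K_t^\infty(0,\bar p_t)\,d\rho_t.
\]
Lower semicontinuity of $K_t^\infty(0,\cdot)$ combined with Fatou (valid since $K_t^\infty(0,\cdot)\ge 0$) gives the $\liminf\ge\int K_t^\infty(0,p_t)\,d\rho_t$ direction, while sublinearity $K_t^\infty(0,\bar p_t+\lambda p_t)\le K_t^\infty(0,\bar p_t)+\lambda K_t^\infty(0,p_t)$, together with $\rho$-integrability of $K_t^\infty(0,\bar p_t)$ inherited from $J_K(\bar x,\bar\theta)<\infty$, gives the matching upper bound. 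The principal subtlety is handling the possibly overlapping singular supports of $\bar\theta^s$ and $\theta^s$, resolved precisely by the dominating-measure device and positive homogeneity of $K_t^\infty(0,\cdot)$.
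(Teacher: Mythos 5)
Your proof is correct and follows the same basic strategy as the paper: split off the endpoint term by additivity of recession functions, and identify $J_K^\infty$ with $J_{K^\infty}$ by a monotone-convergence argument on the difference quotients of the integrands. The one real difference is the treatment of the base point. The paper normalizes $K_t(0,0)=0$ and computes $J_K^\infty(x,\theta)=\lim_\alpha \alpha^{-1}J_K(\alpha x,\alpha\theta)$, so the singular term is invariant under scaling by positive homogeneity and requires no further work; you instead take difference quotients at an arbitrary $(\bar x,\bar\theta)\in\dom J_K$, which forces you to reconcile the possibly overlapping singular supports of $\bar\theta^s$ and $\theta^s$ via the common dominating measure $\rho=|\bar\theta^s|+|\theta^s|$ together with Fatou and sublinearity. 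Your route costs an extra page of bookkeeping but is arguably more robust: it does not presuppose that the origin (or any canonical point) lies in $\dom J_K$, only that some point does, and your explicit subtraction of the linear minorant $x\cdot z_t+u\cdot y_t-\beta_t$ makes transparent exactly where the integrability hypothesis of the lemma enters (it supplies the integrable lower bound needed for monotone convergence, and the nonnegativity needed for Fatou on the singular part). Both arguments then conclude with the same general fact that $(f_1\circ A+f_2)^\infty=f_1^\infty\circ A+f_2^\infty$ for proper sums of closed convex functions.
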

\begin{proof}
We may assume without a loss of generality that $K_t(0,0)=0$. By monotone convergence theorem,
\begin{align*}
J_K^\infty(x,\theta)&=\lim_{\alpha\nearrow\infty}\frac{1}{\alpha}J_K(\alpha x,\alpha \theta)\\
 	&=\lim_{\alpha\nearrow\infty}\int  \frac{1}{\alpha}K_t(\alpha x_t,\alpha (d\theta^a/d\mu)_t)d\mu_t+\int  K^\infty_t(0,d\theta^s)/d|\theta^s)|)_t)d|\theta^s)|_t\\
 	&=\int  K^\infty_t(x_t,(d\theta^a/d\mu)_t)d\mu_t +\int  K^\infty_t(0,d\theta^s)/d|\theta^s)|)_t)d|\theta^s)|_t\\
&= J_{K^\infty}(x,\theta).
\end{align*}
The expression then follows from the general fact that if $f_1,f_2$ are closed convex functions and $A$ is a continuous linear mapping such that $f_1\circ A+f_2$ is proper, then $(f_1\circ A+f_2)^\infty=f_1^\infty\circ A+f_2^\infty$.\qquad\end{proof}

The assumptions in Lemma~\ref{lem:recf} are satisfied under the assumptions of Theorem~\ref{thm:L} whenever $\dom_1 I_H\neq\emptyset$. Indeed, then we have that $\dom_2 I_H\neq\emptyset$ (see the proof of Theorem~\ref{thm:L}), so, by the definition of Hamiltonian, there exist $y\in C$, $z\in L^1$ and $\beta\in L^1$ such that
 \begin{equation*}
 	K_t(x,u)\ge x\cdot z_t+u\cdot y_t-\beta_t.
 \end{equation*}

Combining the previous results with the biconjugate theorem gives a dual representation for the value function. 
\begin{theorem}\label{thm:dr}
Assume that
\begin{enumerate}
\item $t\mapsto\dom_1 H_t$ is left-isc and left outer $\mu$-regular
\item $\emptyset\neq\{x\in X\mid \exists r>0:\, \mathbb B(x_t,r)\subset\dom_1 H_t\ \forall t\}\subset\dom_1 I_H$,
\item $t\mapsto\dom_2 H_t$ is isc and outer $\mu$-regular
\item $\{y\in C\mid y_t\in\interior\dom_2 H_t\,\forall t\}\subset \dom_2 I_H$,
\item there exists a $\bar y\in\dom g\cap AC$ with $\bar y_t\in\interior\dom_2 H_t$ for all $t$,
\item $\{x\mid J_{K^\infty}(x,Dx+u)+k^\infty(x_0,x_{T+})\le 0\}$ is a linear space.
\end{enumerate}
Then the infimum in \eqref{P_u} is attained for every $u$ and
\[
\varphi(u) = \sup_{y\in C\cap X}\{\langle u,y\rangle - J_{\tilde K}(y,Dy) - \tilde k(y_0,y_T)\}.
\]
\end{theorem}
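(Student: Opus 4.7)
The plan is simply to collect together the machinery developed in the previous sections. Hypotheses 1--4 are the hypotheses of Theorem~\ref{thm:pdo} (which in turn contain the four hypotheses of Theorem~\ref{thm:L}), hypothesis 5 is the additional assumption of Theorem~\ref{thm:ctg}, and hypothesis 6 will supply the remaining linearity assumption of Theorem~\ref{thm:cc} once it is reformulated via Lemma~\ref{lem:recf}. The biconjugate theorem then converts closedness of $\varphi$ into the dual representation, and Theorem~\ref{thm:pdo} specialized to $v=0$ identifies the dual objective.

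First I would verify that the closedness of $\varphi$ follows from Theorem~\ref{thm:cc}. By hypothesis 2 the set $\dom_1 I_H$ is nonempty, so the remark following Lemma~\ref{lem:recf} guarantees that the premise of that lemma is satisfied, yielding the recession identity
\[
f^\infty(x,u)=J_{K^\infty}(x,Dx+u)+k^\infty(x_0,x_{T+}).
\]
Evaluated at $u=0$, hypothesis 6 is then exactly the requirement that $\{x\in X\mid f^\infty(x,0)\le 0\}$ be a linear space, so Theorem~\ref{thm:cc} applies and yields both the attainment of the infimum in \eqref{P_u} for every $u\in M$ and the closedness of $\varphi$.

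With $\varphi$ proper, closed and convex, the biconjugate theorem \cite[Theorem~5]{roc74} gives
\[
\varphi(u)=\sup_{y\in C}\{\langle u,y\rangle+g(y)\}=\sup_{y\in C}\{\langle u,y\rangle-f^*(0,y)\}.
\]
Applying Theorem~\ref{thm:pdo} with $v=0$ (so that $\tilde v=y$, $v_{-1}+\tilde v_0=y_0$ and $\tilde v_T=y_T$) gives
\[
f^*(0,y)=\begin{cases} J_{\tilde K}(y,Dy)+\tilde k(y_0,y_T) & \text{if } y\in C\cap X,\\ +\infty & \text{otherwise,}\end{cases}
\]
and restricting the supremum accordingly produces the claimed formula. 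No genuinely new argument is required; the mildly subtle point is matching the linearity condition of Theorem~\ref{thm:cc} with hypothesis 6 through Lemma~\ref{lem:recf}, which in turn forces one to check that Lemma~\ref{lem:recf}'s integrability premises are implied by the other hypotheses of the theorem.
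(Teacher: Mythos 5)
There is a genuine gap at the very first step. You assert that hypotheses 1--4 of the theorem ``are the hypotheses of Theorem~\ref{thm:pdo} (which in turn contain the four hypotheses of Theorem~\ref{thm:L})''. They do not. Theorem~\ref{thm:pdo} is stated ``in addition to the hypotheses of Theorem~\ref{thm:L}'', and conditions 3 and 4 of Theorem~\ref{thm:L} --- the existence, for each $x\in\dom_1 I_H$ (resp.\ $y\in\dom_2 I_H$), of summable affine majorants $H_t(x_t,y)\le -y\cdot w_t+\alpha_t$ (resp.\ minorants $H_t(x,y_t)\ge x\cdot z_t-\beta_t$) --- appear nowhere among the six assumptions of Theorem~\ref{thm:dr}. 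Since the entire chain Theorem~\ref{thm:L} $\to$ \ref{thm:pdo} $\to$ \ref{thm:ctg} $\to$ \ref{thm:cc} that you invoke rests on those two conditions (and the remark after Lemma~\ref{lem:recf} that you use to get the recession identity also presupposes them), your proof cannot get off the ground without first deriving them.

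This derivation is in fact the substance of the paper's proof: given $x\in\dom_1 I_H$, one sets $h_t(y)=-H_t(x_t,y)$, uses hypothesis 5 together with \cite[Lemma~2]{roc71} to find $r>0$ with $\mathbb B(\bar y_t,r)\subset\dom_2 H_t$ for all $t$, deduces from hypothesis 4 that $t\mapsto h_t(\bar y_t+y)$ is summable for $|y|<r$, and then applies \cite[Proposition~3G]{roc76} to produce $w\in L^1$ with $I_K(x,w)<\infty$; the Fenchel inequality $H_t(x_t,y)\le -y\cdot w_t+K_t(x_t,w_t)$ then gives condition 3 of Theorem~\ref{thm:L}, and condition 4 is obtained symmetrically. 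The remainder of your argument --- matching hypothesis 6 with the linearity condition of Theorem~\ref{thm:cc} via Lemma~\ref{lem:recf}, then combining closedness of $\varphi$ with the biconjugate theorem and the $v=0$ case of Theorem~\ref{thm:pdo} --- is correct and is exactly the route the paper takes, but the missing step above is not a formality; it is where hypotheses 4 and 5 actually earn their keep. (A minor additional point: hypothesis 4 is stated with $\interior\dom_2 H_t$ while Theorem~\ref{thm:L} uses $\rinterior$; these agree here because hypothesis 5 forces $\interior\dom_2 H_t\neq\emptyset$ for all $t$, but this should be said.)
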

\begin{proof}
In view of Lemma~\ref{lem:recf} and Theorem~\ref{thm:cc}, it suffices to show that conditions 3 and 4 in Theorem~\ref{thm:L} are satisfied.

Assume that $x\in\dom_1 I_H$ and let $h_t(y)=-H_t(x_t,y)$ so that $h^*_t(u)=K_t(x_t,u)$. By \cite[Lemma~2]{roc71}, there is an $r>0$ such that $\mathbb B(\bar y_t,r)\subset\dom_2 H_t$ for all $t$; see \cite[p.~460]{roc71}. Therefore, by condition 2, $t\mapsto h_t(\bar y_t+y)$ is summable whenever $|y|<r$, so, by \cite[Proposition 3G]{roc76}, there is a $w\in L^1$ such that $I_{K}(x,w)<\infty$. This implies that
\[
	H_t(x_t,y)\le -y\cdot w_t + K_t(x_t,w_t)\ \mu\text{-a.e.}
\]
Thus condition 3 in Theorem~\ref{thm:L} holds. Condition 4 in Theorem~\ref{thm:L} is verified similarly.
\qquad\end{proof}

\section{Optimality conditions}\label{sec:dr}

This section derives optimality conditions for problem \eqref{P_u} when $u=0$. That is, we will be looking at the problem
\begin{equation}\tag{P}\label{P}
\minimize \ \int K_t(x_t,\dot x^a_t)d\mu_t+\int  K_t^\infty(0,\dot x^s_t)d|Dx^s|_t + k(x_0,x_{T+})\ \ \text{over $x\in X$},
\end{equation}
where $\dot x^a= d(Dx^a)/d\mu$ and $\dot x^s=d(Dx^s)/d|Dx^s|$. We associate with \eqref{P} the problem
\begin{equation}\tag{D}\label{D}
\minimize \ \int \tilde K_t(y_t,\dot y^a_t)d\mu_t+\int \tilde K_t^\infty(0,\dot y^s_t)d|Dy^s|_t + \tilde k(y_0,y_T)\ \ \text{over $y\in C\cap X$}.
\end{equation}

For a mapping $S_t:\R^d\rightrightarrows\R^d$ with $t\mapsto\gph S_t$ closed-valued and measurable, and for a function $z\in X$ of bounded variation, we write $Dz\in S(z)$ if
\begin{align*}
  \dot z^a_t &\in S_t(z_t)\quad\text{$\mu$-a.e.},\\
  \dot z^s_t &\in S^s_t(z_t)\quad |Dz^s|\text{-a.e.},
\end{align*}
where the mapping $S^s_t:\R^d\rightrightarrows\R^d$ is defined for each $t$ as the {\em graphical inner limit} (see \cite[Chapter 5]{rw98}) of the mappings $(\alpha S_t)(z):=\alpha S_t(z)$ as $\alpha\downto 0$. Here $t\mapsto\gph S^s_t$ is closed-valued and measurable; see \cite[Theorem 14.20]{rw98}. In particular, $\{t\mid \dot z^a_t \in S_t(z_t)\}$ and $\{t\mid \dot z^s_t \in S^s_t(z_t)\}$ are measurable sets (see \cite[Section 14.B]{rw98}), so $Dz\in S(z)$ is indeed well-defined. This definition is inspired by \cite[Section~14]{roc78} where the Hamiltonian conditions were extended from absolutely continuous trajectories to trajectories of bounded variation. Indeed, by \cite[Theorem 12.37]{rw98}, $S^s_t(z)$ coincides with the recession cone of $S_t(z)$ whenever $S_t$ is maximal monotone and $z\in\dom S_t$. 

We say that $x\in X$ and $y\in C\cap X$ satisfy the {\em generalized Hamiltonian equation} if 
\begin{equation*}
D(x,y)\in\Pi\tilde\partial H(x,y),
\end{equation*}
where $\Pi(v,u)=(u,v)$ and
\begin{align*}
\tilde\partial H_t(x,y) &= \partial_x H_t(x,y)\times\partial_y[-H_t](x,y).
\end{align*}
Since $\partial\tilde H_t$ is maximal monotone, $(\tilde\partial H)^s_t$ equals the normal cone mapping $N_{\cl\dom H_t}$ of $\cl\dom H_t$; see \cite[Example 12.27, Theorem 12.37]{rw98} and \cite[Theorem 37.4]{roc70a}. Moreover, $t\mapsto\gph\partial\tilde H_t$ is closed-valued and measurable \cite[Example 12.8 and Theorem 14.56]{rw98} and consequently the generalized Hamiltonian equation is well-defined. 

When $\dom H_t=\reals^d\times\reals^d$ (no state constraints), we have $N_{\cl\dom H_t}(x,y)=\{0\}$, so feasible trajectories are necessarily absolutely continuous and the generalized Hamiltonian equation reduces to that studied e.g.\ in \cite{roc70c}. When $\dom_1 H_t=\R^d$ (no state constraints in \eqref{P}), we recover the optimality conditions of \cite{roc72a} for optimal control problems; see \cite[Lemma 4]{roc72a}. 

As usual $x\in X$ and $y\in C\cap X$ are said to satisfy the {\em transversality condition} if
\begin{align*}
  (y_{0},-y_{T}) &\in\partial k( x_0,x_{T+}).
\end{align*}

\begin{theorem}\label{thm:nec}
Assume that $t\mapsto\dom_1 H_t$ is left-outer $\mu$-regular and that $t\mapsto\dom_2 H_t$ is outer $\mu$-regular. Then $\inf(P)\ge-\inf(D)$. For $\inf(P)=-\inf(D)$ to hold with attainment at feasible $x$ and $y$ respectively, it is necessary and sufficient that $x$ and $y$ satisfy the generalized Hamiltonian equation and the transversality condition.
\end{theorem}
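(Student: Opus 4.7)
The plan is to establish weak duality by a measure-theoretic Fenchel-inequality argument, then read off the equality and attainment conditions as the pointwise subdifferential relations that must hold when every inequality in the chain is tight.

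For weak duality, take $x\in X$ feasible for \eqref{P} and $y\in C\cap X$ feasible for \eqref{D}, both with finite objective values. Decompose $Dx$ and $Dy$ into $\mu$-absolutely continuous and $\mu$-singular parts. On the $\mu$-a.c.\ parts the Fenchel inequality for the conjugate pair $K_t$, $\tilde K_t(y,v)=K_t^*(v,y)$ gives $\mu$-a.e.
\[
K_t(x_t,\dot x^a_t)+\tilde K_t(y_t,\dot y^a_t)\ge x_t\cdot\dot y^a_t+\dot x^a_t\cdot y_t.
\]
On the singular parts I use the identification $K_t^\infty(0,\cdot)=\sigma_{\cl\dom_2 H_t}$, which follows from $K_t^\infty$ being the support function of $\dom K_t^*$ together with the identity $\dom_2 H_t=\operatorname{proj}_2\dom K_t^*$ recalled in Section~\ref{sec:cd}. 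The feasibility $y_t\in\cl\dom_2 H_t$ $\mu$-a.e.\ is then promoted to every $t$ by continuity of $y$ and outer $\mu$-regularity of $\dom_2 H$, so $K_t^\infty(0,\dot x^s_t)\ge y_t\cdot\dot x^s_t$ in particular $|Dx^s|$-a.e. The symmetric statement for $\dot y^s$ uses $\tilde K_t^\infty(0,\cdot)=\sigma_{\cl\dom_1 H_t}$ together with left-outer $\mu$-regularity of $\dom_1 H$ and left-continuity of $x$. Integrating and summing,
\[
J_K(x,Dx)+J_{\tilde K}(y,Dy)\ge \int y_t\,dDx_t+\int x_t\,dDy_t=x_{T+}\cdot y_T-x_0\cdot y_0,
\]
where the last equality is the integration-by-parts formula from Section~\ref{sec:cd}. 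Adding the endpoint Fenchel inequality $k(x_0,x_{T+})+\tilde k(y_0,y_T)\ge x_0\cdot y_0-x_{T+}\cdot y_T$ (which uses $\tilde k(\tilde a,\tilde b)=k^*(\tilde a,-\tilde b)$) yields $\inf(P)+\inf(D)\ge 0$.

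For the characterization, $\inf(P)=-\inf(D)$ attained at feasible $x,y$ forces every inequality above to be tight. The classical convex-concave conjugacy
\[
(v,y)\in\partial K_t(x,u)\ \iff\ v\in\partial_x H_t(x,y)\ \text{and}\ u\in\partial_y[-H_t](x,y),
\]
turns $\mu$-a.e.\ Fenchel equality into $\dot x^a_t\in\partial_y[-H_t](x_t,y_t)$ and $\dot y^a_t\in\partial_x H_t(x_t,y_t)$, while equality in the support-function bounds is the normal-cone inclusion $\dot x^s_t\in N_{\cl\dom_2 H_t}(y_t)$ $|Dx^s|$-a.e.\ and $\dot y^s_t\in N_{\cl\dom_1 H_t}(x_t)$ $|Dy^s|$-a.e. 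Combined with the identification $(\tilde\partial H)^s_t=N_{\cl\dom H_t}=N_{\cl\dom_1 H_t}\times N_{\cl\dom_2 H_t}$ recorded in the text, the absolutely continuous and singular conditions package exactly as $D(x,y)\in\Pi\tilde\partial H(x,y)$. Endpoint Fenchel equality is equivalent to $(y_0,-y_T)\in\partial k(x_0,x_{T+})$, the transversality condition. The converse is immediate: under the generalized Hamiltonian equation and transversality every inequality in the chain is an equality, so the two objective values sum to zero, which combined with weak duality gives $\inf(P)=-\inf(D)$ with attainment.

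The main obstacle is the treatment of the $\mu$-singular masses without the stronger continuity hypotheses of \cite{roc72a,roc76b}: outer $\mu$-regularity of $\dom_2 H$ and its left-continuous counterpart for $\dom_1 H$ are what guarantee $y_t\in\cl\dom_2 H_t$ and $x_t\in\cl\dom_1 H_t$ not merely $\mu$-a.e.\ but at every point, in particular at the points carrying $Dx^s$ and $Dy^s$, where the support-function bounds on the singular terms would otherwise fail. The identification of $(\tilde\partial H)^s$ with the normal cone to $\cl\dom H$ (via maximal monotonicity of $\partial\tilde H$) is what allows the two a priori distinct equality conditions---Fenchel $\mu$-a.e.\ and support-function equality on the singular parts---to be encoded by the single graph inclusion $D(x,y)\in\Pi\tilde\partial H(x,y)$.
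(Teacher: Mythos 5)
Your proposal is correct and follows essentially the same route as the paper's proof: the pointwise Fenchel inequalities on the $\mu$-absolutely continuous parts, the support-function bounds $K_t^\infty(0,\cdot)=\sigma_{\cl\dom_2 H_t}$ and $\tilde K_t^\infty(0,\cdot)=\sigma_{\cl\dom_1 H_t}$ on the singular parts (with outer $\mu$-regularity used exactly as in the paper, via \cite[Theorem~1]{per12}, to upgrade $\mu$-a.e.\ membership in the closed domains to membership at every $t$), integration by parts, and the identification of the equality cases with the generalized Hamiltonian equation (via \cite[Theorem~37.5]{roc70a}) and the transversality condition. No gaps.
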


\begin{proof}
We have $x_t\in\dom_1 H_t$ and $y_t\in\dom_2 H_t$ $\mu$-a.e.\ so, by \cite[Theorem~1]{per12}, $x_t\in\cl\dom_1 H_t$ and $y_t\in\cl\dom_2 H_t$ for all $t$. Consequently, $K_t^\infty(0,x)\ge x\cdot y_t$ and $\tilde K_t^\infty(0,v)\ge v\cdot x_t$ for all $t$. We get
\begin{align}
\begin{split}\label{eq:fen1}
	K_t(x_t,\dot x^a_t)+\tilde K_t(y_t,\dot y^a_t)&\ge x_t\cdot \dot y^a_t+ y_t\cdot\dot x^a_t\qquad\mu\text{-a.e.},\\
	K_t^\infty(0,\dot x^s_t)&\ge \dot x^s_t\cdot y_t\qquad |Dx^s|\text{-a.e.},\\
	\tilde K_t^\infty(0,\dot y^s_t)&\ge \dot y^s_t\cdot x_t\qquad |Dy^s|\text{-a.e.},\\
	k(x_0,x_{T+}) + \tilde k(y_{0},y_T) &\ge x_0\cdot y_0-x_{T+}\cdot y_T.
\end{split}
\end{align}
Integration by parts gives
\begin{align}
\begin{split}\label{eq:fen3}
& J_K(x,Dx) + k(x_{0},x_{T+}) + J_{\tilde K}(y,Dy) + \tilde k(y_{0},y_{T})\\
& \ge \int y_t dx_t+\int x_tdy_t+x_0\cdot y_{0}-y_{T}\cdot x_{T+}=0,
\end{split}
\end{align}
where the inequality holds as equality if and only if the inequalities in \eqref{eq:fen1} hold as equalities almost everywhere. In particular, we get $\inf(P)\ge-\inf(D)$.
 
Since $\partial\delta_{\cl\dom_1 H_t}(x)=N_{\cl\dom_1 H_t}(x)$ and since $\tilde K^\infty_t(0,\cdot)$ is the support function of $\cl\dom_1 H_t$, we have $\tilde K_t(0,v)=x\cdot v$ if and only if $v\in N_{\cl\dom_1 H_t}(x)$. Similarly $K_t(0,u)=u\cdot y$ if and only if $u\in N_{\cl\dom_2 H_t}(y)$. By \cite[Theorem 37.5]{roc70a}, we have $K_t(x,u)+\tilde K_t(y,v)=x\cdot v+u\cdot y$ if and only if $(v,u)\in\tilde\partial H_t(x,y)$. Therefore, \eqref{eq:fen3} holds as an equality if and only if the generalized Hamiltonian equation and the transversality condition hold. 
\qquad\end{proof}

The conditions of Theorem~\ref{thm:nec} generalize those in \cite[Theorem 2]{roc76b}. Indeed, outer semicontinuous mappings are both left-outer $\mu$-regular and outer $\mu$-regular; see $(3)$ in \cite{per12}. On the other hand, in \cite[Theorem 2]{roc76b} both trajectories are allowed to be discontinuous. 

Combining Theorem~\ref{thm:nec} with \cite[Theorem 1(b)]{roc71b} we obtain the following, where the problem \eqref{P_AC} is defined in the introduction.

\begin{theorem}\label{thm:eds}
Assume that $\mu$ is the Lebesque measure and that
\begin{enumerate}
\item $t\mapsto \dom_1 H_t$ is left outer $\mu$-regular,
\item $t\mapsto\dom_2 H_t$ is outer $\mu$-regular,
\item for all $x\in\R^d$ there exist $w\in L^1$ and $\alpha\in L^1$ such that
\[
	H_t(x,y)\le -y\cdot w_t+\alpha_t \quad\text{$\mu$-a.e.},
\]
\item  there exist $z\in L^1$, $y\in L^\infty$ and $\beta\in L^1$ such that
\[
	H_t(x,y_t)\ge x\cdot z_t-\beta_t\quad\mu\text{-a.e.}
\]
\item  $\{y\in AC\mid \int\tilde K^\infty_t(y_t,\dot y_t)d\mu_t+\tilde k^\infty(y_0,y_T)\le 0\}$ is a linear space.
\end{enumerate}
Then $\inf (P_{AC})=\inf (P)=-\inf (D)$, the optimal values are finite and the infimum in \eqref{D} is attained by some $y\in AC$. In particular, $x\in X$ attains the infimum in \eqref{P} if and only if it satisfies the generalized Hamiltonian equation and the transversality condition with some $y\in AC$.
\end{theorem}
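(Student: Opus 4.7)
The plan is to combine the weak duality and the optimality characterization already established in Theorem~\ref{thm:nec} with the classical existence/duality result \cite[Theorem~1(b)]{roc71b} for the absolutely continuous problem \eqref{P_AC}. That reference produces an absolutely continuous dual optimizer for \eqref{P_AC}, and since any $AC$-arc belongs to $C\cap X$, such an optimizer is also admissible for \eqref{D}. Once this AC duality is in hand, the extension to \eqref{P} is forced by sandwiching infima between those of the $AC$-restricted primal and dual problems.

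The first step is to verify that conditions 3, 4 and 5 supply the growth and recession hypotheses of \cite[Theorem~1(b)]{roc71b}. Using the conjugacy between $K_t(x,\cdot)$ and $-H_t(x,\cdot)$, conditions 3 and 4 translate into the primal and dual summability conditions Rockafellar uses to pass between Hamiltonian and Lagrangian formulations, while condition 5 is the recession-linearity condition that drives dual attainment. Applying the reference theorem yields $\inf(P_{AC})=-\inf(D_{AC})$ with finite common value, where $(D_{AC})$ denotes \eqref{D} restricted to $y\in AC$, and produces some $\bar y\in AC$ attaining $\inf(D_{AC})$.

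Next I chain the inequalities
\[
-\inf(D_{AC})=\inf(P_{AC})\ge\inf(P)\ge-\inf(D)\ge-\inf(D_{AC}),
\]
where the first inequality uses $AC\subset X$, the middle one is the weak-duality statement of Theorem~\ref{thm:nec} (whose regularity hypotheses reduce to conditions 1 and 2), and the last uses $AC\subset C\cap X$. All four quantities therefore coincide, giving $\inf(P_{AC})=\inf(P)=-\inf(D)$ with finite common value, and $\bar y\in AC\subset C\cap X$ still attains $\inf(D)$.

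For the optimality characterization, suppose $x\in X$ attains $\inf(P)$. Then $(x,\bar y)$ achieves $J_K(x,Dx)+k(x_0,x_{T+})+J_{\tilde K}(\bar y,D\bar y)+\tilde k(\bar y_0,\bar y_T)=0$, and the equality clause of Theorem~\ref{thm:nec} yields the generalized Hamiltonian equation and the transversality condition with $y=\bar y\in AC$. Conversely, if $(x,y)$ with $y\in AC$ satisfies these two conditions, Theorem~\ref{thm:nec} turns \eqref{eq:fen3} into an equality, so $J_K(x,Dx)+k(x_0,x_{T+})=-J_{\tilde K}(y,Dy)-\tilde k(y_0,y_T)\le-\inf(D)=\inf(P)$, which shows that $x$ is optimal. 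The main obstacle is the first step: one must confirm that the Hamiltonian-level hypotheses 3 and 4 match the Lagrangian-level summability conditions actually invoked in \cite[Theorem~1(b)]{roc71b}, which is in the spirit of the integrability extraction performed at the end of the proof of Theorem~\ref{thm:dr}.
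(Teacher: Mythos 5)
Your proposal is correct and follows essentially the same route as the paper: translate conditions 3--5 into the summability and recession hypotheses of \cite[Theorem~1(b)]{roc71b} (via $K_t(x,u)=\sup_y\{H_t(x,y)+u\cdot y\}$, exactly the extraction you anticipate), obtain $\inf(P_{AC})=-\inf(D_{AC})$ with an $AC$ dual optimizer, and combine with Theorem~\ref{thm:nec}. The only cosmetic difference is that you identify $\inf(D)$ with $\inf(D_{AC})$ by sandwiching, whereas the paper observes that conditions 1 and 3 force $\dom_1 H_t=\R^d$ for all $t$, so $\tilde K_t^\infty(0,\cdot)=\delta_{\{0\}}$ and the dual objective is $+\infty$ off $AC$; both arguments are valid and yield the same conclusion.
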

\begin{proof}
Condition 3 implies that $\dom_1 H_t=\R^d$ $\mu$-a.e. which together with condition 1 gives that $\dom_1 H_t=\R^d$ for all $t$. Hence we have that $\tilde K_t^\infty(0,v)=\delta_{0}(v)$ and consequently $J_{\tilde K}(y,Dy)=+\infty$ unless $y\in AC$.

By condition 3 and by the definition of the Hamiltonian, for every $x\in\R^d$ there exist functions $w\in L^1$ and $\alpha\in L^1$ such that
\[
	K_t(x,w_t)\le \alpha_t\quad\text{$\mu$-a.e.}
\]
Similarly condition 4 implies that there exist functions $z\in L^1$, $y\in L^\infty$ and $\beta\in L^1$ such that
\[
	K_t(x,u)\ge x\cdot z_t+u\cdot y_t-\beta_t\quad\mu\text{-a.e.}
\]
Therefore, the conditions $(A)$--$(C)$ and $D_0$ in \cite{roc71b} hold. Consequently, we get from condition 4 and \cite[Theorem 3]{roc71b} that the assumptions of \cite[Theorem 1.(b)]{roc71b} are satisfied, so $\inf (P_{AC})=-\inf (D)$, these optimal values are finite and the infimum in \eqref{D} is attained by some $y$. Combining these facts with Theorem~\ref{thm:nec} gives the rest of the claims.
\qquad\end{proof}

Conditions 3--5 of Theorem~\ref{thm:eds} are just reformulations of the assumptions of \cite[Theorem 1(b)]{roc71b} so that they are readily comparable with the other assumptions made in this article.

\section{Appendix}

The first part of this appendix is concerned with the general conjugate duality framework of Rockafellar~ \cite{roc74}. Accordingly, $X$ and $U$ denote arbitrary locally convex topological vector spaces in separating duality with $V$ and $Y$, respectively. We fix a proper closed convex function $f:X\times U\rightarrow\ereals$ and denote the associated value function by
\[
\varphi(u)=\inf_{x\in X}f(x,u).
\]
Given $u\in U$, we define the extended real-valued function $\gamma_u$ on $V$ by
\[
\gamma_u(v)=\inf_{y\in Y}\{ f^*(v,y)-\langle u,y\rangle\}.
\]
Note that the domain of $\gamma_u$ equals
\[
\Gamma := \{v\mid \exists y: f^*(v,y)<\infty\}
\]
for every $u$.

The {\em recession function} $h^\infty$ of a closed proper convex $h:U\rightarrow\ereals$ is defined by 
\[
h^\infty(u)=\sup_{\alpha>0}\frac{h(\alpha u+\bar u)-h(\bar u)}{\alpha},
\]
where the supremum is independent of the choice of $\bar u\in\dom h$; see \cite[Theorem~8.5]{roc70a} for a proof in the finite-dimensional case. The recession function is sublinear and closed whenever $h$ is closed; see \cite{roc66}.

\begin{theorem}\label{thm:dvf}
Assume that, for every $u$, the function $\gamma_u$ is bounded from above on a neighborhood of the origin relative to $\aff\Gamma$. Then $\varphi$ is closed and proper, the infimum in the definition of $\varphi$ is attained for every $u\in U$ and
\[
\varphi^\infty(u)=\inf_{x\in X}f^\infty(x,u).
\]
\end{theorem}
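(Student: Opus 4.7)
The plan is to apply the Rockafellar conjugate-duality framework, treating $\gamma_u$ as the dual of $f(\cdot,u)$. The starting observation is the identity
\[
\gamma_u^*(x)=\sup_v\{\langle x,v\rangle-\gamma_u(v)\}=\sup_{v,y}\{\langle x,v\rangle+\langle u,y\rangle-f^*(v,y)\}=f^{**}(x,u)=f(x,u),
\]
where the conjugate of $\gamma_u$ is taken in the $V$-$X$ pairing and the last equality uses closedness of $f$. This yields $\varphi(u)=\inf_x\gamma_u^*(x)=-\gamma_u^{**}(0)$, while the parallel calculation $\varphi^*(y)=f^*(0,y)$ gives $\varphi^{**}(u)=-\gamma_u(0)$.

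Next I would invoke the boundedness hypothesis: for a convex function, boundedness above on a neighborhood of a point relative to the affine hull of its effective domain implies continuity at that point in the relative topology, and relative continuity at a point forces coincidence with the biconjugate there. Thus $\gamma_u(0)=\gamma_u^{**}(0)$ for every $u$, so $\varphi=\varphi^{**}$ is closed; properness is inherited from that of $f$ together with $\Gamma\neq\emptyset$.

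For attainment I would characterize minimizers of $f(\cdot,u)$ as elements of $\partial\gamma_u(0)$: the Young--Fenchel equality $\gamma_u(0)+\gamma_u^*(\bar x)=0$ rearranges to $f(\bar x,u)=-\gamma_u(0)=\varphi(u)$, and the relative continuity of $\gamma_u$ at $0$ yields $\partial\gamma_u(0)\neq\emptyset$ via a Hahn--Banach supporting-hyperplane argument.

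For the recession formula, the easy direction $\varphi^\infty(u)\le\inf_x f^\infty(x,u)$ comes by fixing $\bar u\in\dom\varphi$ and a minimizer $\bar x$ there, comparing $\varphi(\bar u+\alpha u)\le f(\bar x+\alpha x,\bar u+\alpha u)$, and letting $\alpha\to\infty$. The reverse inequality is the main obstacle: using attainment at each $\bar u+\alpha u$ one writes the minimizer as $\bar x+\alpha w_\alpha$, the quotient $(f(\bar x+\alpha w_\alpha,\bar u+\alpha u)-\varphi(\bar u))/\alpha$ tends to $\varphi^\infty(u)$, and one would want to extract a cluster point of $\{w_\alpha\}$ and invoke lower semicontinuity of $f^\infty$. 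Because such extraction is delicate in a general locally convex setting, a cleaner route is to dualize: $\varphi^\infty$ equals the support function of $\dom\varphi^*=\{y:(0,y)\in\dom f^*\}$, the conjugate of $u\mapsto\inf_x f^\infty(x,u)$ is $\delta_{\{y:(0,y)\in\cl\dom f^*\}}$, and one checks that the two resulting support functions agree on $\cl\dom\varphi^*$ using the closedness of $\varphi$ already established and the interior-point content of the boundedness hypothesis.
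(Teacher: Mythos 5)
Your treatment of closedness, properness, and attainment is sound and essentially the paper's: you identify $\gamma_u^*=f(\cdot,u)$, use the boundedness hypothesis to force $\gamma_u(0)=\gamma_u^{**}(0)$, and conclude $\varphi=\varphi^{**}$ with the infimum attained. (The paper obtains attainment from inf-compactness of $\gamma_u^*$ rather than from $\partial\gamma_u(0)\neq\emptyset$; these are two faces of the same hypothesis. The paper also carries out explicitly the reduction to the case $\aff\Gamma=V$ by passing to the quotient of $X$ by the annihilator of $\aff\Gamma$; your Hahn--Banach remark gestures at this, but you would need the same reduction for the recession part as well.)

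The genuine gap is in the hard direction of the recession formula. Write $\psi(u)=\inf_{x}f^\infty(x,u)$. Your dual route can at best identify $\varphi^\infty$ with $\psi^{**}$: indeed $\psi^*(y)=(f^\infty)^*(0,y)=\delta_{\cl\dom f^*}(0,y)$, so biconjugation returns the support function of $\{y:(0,y)\in\cl\dom f^*\}$, which is $\cl\psi$, not $\psi$. The resulting chain $\psi^{**}\le\varphi^\infty\le\psi$ does not close unless you separately prove that the infimal projection $\psi$ is lower semicontinuous with attained infimum --- and that is precisely the difficulty you were trying to sidestep. (A secondary problem: $\dom\varphi^*=\{y:(0,y)\in\dom f^*\}$ and $\{y:(0,y)\in\cl\dom f^*\}$ need not have the same closed convex hull, so even the claimed agreement of the two support functions requires an argument you do not supply.) The direct argument you abandoned is the one that works, and the boundedness hypothesis is exactly what makes it work without any sequential extraction: since $\gamma_u^*=f(\cdot,u)$ is inf-compact, the difference quotients $f_\alpha(x,u)=\bigl(f(\bar x+\alpha x,\bar u+\alpha u)-f(\bar x,\bar u)\bigr)/\alpha$ are nondecreasing in $\alpha$ and inherit inf-compactness, so for any $\beta>\sup_{\alpha>0}\inf_x f_\alpha(x,u)=\varphi^\infty(u)$ the level sets $B_\alpha=\{x: f_\alpha(x,u)\le\beta\}$ form a nested family of nonempty compact sets; the finite intersection property yields $x'\in\bigcap_{\alpha>0}B_\alpha$, whence $f^\infty(x',u)\le\beta$ and $\inf_x f^\infty(x,u)\le\varphi^\infty(u)$.
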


\begin{proof}
Assume first that $\aff\Gamma=V$. Since $\gamma_u$ is convex (see e.g.\ \cite[Theorem~1]{roc74}) and bounded from above on a neighborhood of the origin, we have that $\gamma_u^*=f(\cdot,u)$ is inf-compact and $\gamma_u^{**}(0)=\gamma_u(0)$ (see e.g. \cite[Theorem 10]{roc74}). Therefore
\[
\varphi(u)=\inf_{x\in X}f(x,u)=-\gamma_u^{**}(0)=-\gamma_u(0)=\sup_{y\in Y}\{\langle u,y\rangle-f^*(0,y)\},
\]
where the infimum is attained and the last expression is closed in $u$. This implies together with the properness of $f$ that $\varphi$ is closed and proper.

Let $\bar u\in\dom\varphi$ and $\bar x\in X$ be such that $\varphi(\bar u)=f(\bar x,\bar u)$. We have that
\begin{align*}
\varphi^\infty(u) &= \sup_{\alpha>0}\frac{\varphi(\bar u + \alpha u)-\varphi(\bar u)}{\alpha}\\
&= \sup_{\alpha>0}\inf_{x\in X}f_\alpha(x,u),
\end{align*}
where
\[
f_\alpha(x,u) = \frac{f(\bar x+\alpha x,\bar u + \alpha u)-f(\bar x,\bar u)}{\alpha}.
\]
Clearly,
\[
\varphi^\infty(u) \le \inf_{x\in X}\sup_{\alpha>0}f_\alpha(x,u) = \inf_{x\in X} f^\infty(x,u).
\]
To prove the converse, let $\beta>\sup_{\alpha>0}\inf_{x\in X}f_\alpha(x,u)$ and let
\[
B_\alpha=\{x\in X\mid f_\alpha(x,u)\le\beta\}.
\]
The functions $f_\alpha$ are non-decreasing in $\alpha$, so the sets $B_\alpha$ are non-increasing in $\alpha$. Since the functions $x\mapsto f_\alpha(x,u)$ inherit inf-compactness from $x\mapsto f(x,u)$, we get, by the finite intersection property, that there is an $x'\in X$ with $x'\in B_\alpha$ for every $\alpha>0$. Thus,
\[
\sup_{\alpha>0}f_\alpha(x',u)\le\beta
\]
or in other words, $f^\infty(x',u)\le\beta$. Since $\beta>\sup_{\alpha>0}\inf_{x\in X}f_\alpha(x,u)$ was arbitrary, we have
\[
\inf_{x\in X} f^\infty(x,u) \le \sup_{\alpha>0}\inf_{x\in X}f_\alpha(x,u) = \varphi^\infty(u),
\]
which completes the proof for the case $\aff\Gamma=V$.

We now turn to the general case $\tilde V:=\aff\Gamma\subseteq V$. Let $N=\{x\in X\mid \langle x,\tilde v\rangle=0\ \forall \tilde v\in \tilde V\}$, $[x]=x+N$ and $X/N=\{[x]\mid x\in X\}$. By Hahn-Banach theorem, every continuous linear functional on $\tilde V$ extends to an element of $X$. On the other hand, $x'\in X$ and $x\in X$ define the same continuous linear functional on $\tilde V$ if and only if $x'\in [x]$. Thus $X/N$ can be identified with the continuous dual of $\tilde V$ with the pairing $\langle [x],\tilde v\rangle=\langle x,\tilde v\rangle$. 

Defining $\tilde f:X/N\times U\to\ereals$ by $\tilde f=(f^*|_{\tilde V\times Y})^*$, we have
\begin{align*}
  \tilde f([x],u) =\sup\{\langle x,\tilde v\rangle+\langle u,y\rangle -f^*(\tilde v,y)\}=f(x,u)
\end{align*}
and 
\[
\varphi(u)=\inf_{[x]\in X/N} \tilde f([x],u).
\]
Since $\tilde f^*=f^*|_{\tilde V\times Y}$, we can apply the first part of the proof to the conjugate duality framework corresponding to $\tilde f$. Thus $\varphi$ is closed, the infimum in the definition of $\varphi$ is attained and since $\tilde f^\infty([x],u)=f^\infty (x,u)$, we get
\[
\varphi^\infty(u)=\inf_{[x]\in X/N}  \tilde f^\infty([x],u)= \inf_{[x]\in X/N}f^\infty(x,u)=\inf_{x\in X}f^\infty(x,u),
\]
which finishes the proof.
\qquad \end{proof}

The following corollary was used in the proof of Theorem~\ref{thm:cc}.
\begin{corollary}\label{cor:dvf}
Assume that $X=R^d$ and that
\[
	\{x\,|\,f^\infty(x,0)\le 0\}
\] 
is a linear space. Then $\varphi$ is closed and proper, the infimum in the definition of $\varphi$ is attained for every $u\in U$ and
\[
\varphi^\infty(u)=\inf_{x\in X}f^\infty(x,u).
\]
\end{corollary}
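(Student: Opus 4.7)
My strategy is to reduce to the case where the linearity space $L := \{x \in \R^d \mid f^\infty(x,0) \le 0\}$ is trivial and then prove the conclusions directly using finite-dimensional convex analysis. Since $L$ is assumed to be a linear subspace, for every $l \in L$ both $f^\infty(l,0) \le 0$ and $f^\infty(-l,0) \le 0$, and sublinearity of $f^\infty$ then forces $f^\infty(\pm l,0) = 0$. The standard fact that simultaneous recession in opposite directions is a direction of translation-invariance yields $f(x+l,u) = f(x,u)$ for every $l \in L$, $x \in \R^d$, $u \in U$, so $f$ descends to a proper closed convex function $g$ on the finite-dimensional quotient $(\R^d/L) \times U$ with the same value function $\varphi$ and the same recession function in the $x$-variable. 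This reduces the problem to the case $L = \{0\}$.

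In the reduced setting, every section $x \mapsto f(x,u)$ has recession function $f^\infty(\cdot,0)$ strictly positive off the origin, so it has bounded sublevel sets and is therefore inf-compact on $\R^d$ for every $u$. Properness of $\varphi$ is then immediate and the infimum is attained for every $u \in \dom\varphi$. For the closedness of $\varphi$, I would take a net $(u_\alpha,\lambda_\alpha) \to (u,\lambda)$ in $\epi\varphi$ and pick $x_\alpha \in \R^d$ with $f(x_\alpha,u_\alpha) = \varphi(u_\alpha) \le \lambda_\alpha$. Were $\{x_\alpha\}$ unbounded, a subnet would satisfy $|x_\alpha| \to \infty$ and $x_\alpha/|x_\alpha| \to \bar x$ with $|\bar x|=1$; since $u_\alpha/|x_\alpha| \to 0$ in $U$, the liminf characterization of the recession function would then give $f^\infty(\bar x,0) \le 0$, contradicting $L=\{0\}$. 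Hence $\{x_\alpha\}$ is eventually bounded, admits a cluster point $x^*$ in $\R^d$, and lower semicontinuity of $f$ yields $f(x^*,u) \le \lambda$, so $\varphi(u) \le \lambda$.

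For the recession identity $\varphi^\infty(u) = \inf_{x \in \R^d} f^\infty(x,u)$, the inequality ``$\le$'' is immediate from $\varphi(\bar u + \alpha u) \le f(\bar x + \alpha x, \bar u + \alpha u)$ for any $(\bar x,\bar u) \in \dom f$ and $x \in \R^d$. For the reverse, I would mimic the finite-intersection-property argument from the proof of Theorem~\ref{thm:dvf}: given $\beta > \varphi^\infty(u)$, the sublevel sets at height $\beta$ of the rescaled difference quotients $f_\alpha(\cdot,u)$ are nonempty, nested in $\alpha$, and compact thanks to inf-compactness of each section, so by compactness in $\R^d$ they share a common point $x'$, which then satisfies $f^\infty(x',u) \le \beta$; letting $\beta \downarrow \varphi^\infty(u)$ concludes.

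\textbf{Main obstacle.} The delicate point is the liminf inequality for the recession function used in the closedness step when $U$ is merely locally convex rather than normed: I must verify that a convergent net $u_\alpha \to u$ in $U$ satisfies $u_\alpha/|x_\alpha| \to 0$ whenever $|x_\alpha| \to \infty$, which reduces to the absorption of convergent nets into every balanced neighborhood of the origin in $U$; the argument is routine but needs to be spelled out to justify applying the finite-dimensional recession-function machinery across the $\R^d \times U$ product.
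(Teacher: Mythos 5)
Your argument is correct, but it takes a genuinely different route from the paper's. The paper's proof is purely a verification of the hypothesis of Theorem~\ref{thm:dvf}: since $V=\reals^d$, boundedness of $\gamma_u$ above near the origin relative to $\aff\Gamma$ reduces to $0\in\rinterior\dom\gamma_0$, which by the finite-dimensional polarity between relative interiors and lineality spaces (\cite[Corollary~13.3.4(b)]{roc70a}) is exactly the assumed linearity of $\{x\mid f^\infty(x,0)\le 0\}$, using $\gamma_0^*=f(\cdot,0)$. You instead work entirely on the primal side: you quotient out the lineality space $L$ of $f^\infty(\cdot,0)$, observe that in the reduced problem each section $f(\cdot,u)$ has recession function $f^\infty(\cdot,0)$ positive off the origin and is therefore inf-compact, and then establish attainment, closedness of $\varphi$ (via a net argument), and the recession identity (via the same finite-intersection argument as in Theorem~\ref{thm:dvf}) by hand. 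It is worth noting that your quotient by $L$ coincides with the paper's quotient by $N$ in the proof of Theorem~\ref{thm:dvf}, since $N$ is the annihilator of $\aff\dom\gamma_0$, i.e.\ the lineality space of $\gamma_0^*=f(\cdot,0)$; so your proof essentially re-derives the relevant portion of Theorem~\ref{thm:dvf} primally rather than invoking it. What the paper's route buys is brevity (three citations to \cite{roc70a} plus an appeal to the already-proved theorem, whose closedness claim comes for free from $\varphi(u)=\sup_y\{\langle u,y\rangle-f^*(0,y)\}$); what your route buys is independence from the conjugate $f^*$ and a more transparent picture of why sublevel sets are compact. The one technical point you flag --- that $u_\alpha/|x_\alpha|\to 0$ in $U$ for a convergent net $u_\alpha$ and $|x_\alpha|\to\infty$ --- does go through (absorb the tail of the net into $u+W$ for a balanced neighborhood $W$ and use continuity of scalar multiplication), and the resulting inequality $f^\infty(\bar x,0)\le 0$ follows from lower semicontinuity of $f$ applied to the convex combinations $(1-s/|x_\alpha|)(\bar x_0,\bar u_0)+(s/|x_\alpha|)(x_\alpha,u_\alpha)$; so there is no gap, only a step that must be written out as you indicate.
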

\begin{proof}
Since $\gamma_u$ is now a convex function on $\R^d$, it suffices to show that the origin belongs to the relative interior of $\dom\gamma_u=\dom\gamma_0$; see \cite[Theorem 10.1]{roc70a}. By \cite[Theorem~7.4.1]{roc70a}, we have $\rinterior\dom\gamma_0=\rinterior\dom\cl\gamma_0$ while, by \cite[Corollary 13.3.4(b)]{roc70a}, $0\in\rinterior\dom\cl\gamma_0$ if and only if
\[
\mathcal L=\{x\,|\, (\gamma_0^*)^\infty(x)\le 0\}
\]
is a linear space. By \cite[Theorem~8.7]{roc70a},
\[
\mathcal L=\{x\,|\,\gamma_0^*(x)\le 0\}^\infty = \{x\,|\, f(x,0)\le 0\}^\infty=\{x\,|\, f^\infty(x,0)\le 0\},
\]
where we have used the fact that $\gamma_u^*(x)=f(x,u)$, by definition.
\qquad\end{proof}

The following lemma was used in the proof of Theorem~\ref{thm:ctg}. Its proof is rather standard in the case when $\mu$ is the Lebesque measure.

\begin{lemma}\label{app:ody}
Let $F:\R^d\times[0,T]\rightarrow\R^d$ be jointly measurable. Assume that there exists a $c\in L^1$ such that $|F_t(y^1)-F_t(y^2)|\leq |y^1-y^2|c_t$ and $|F_t(y^1)|\leq (1+|y^1|)c_t$ for all $t$ and $y^1,y^2\in\R^d$. Then for every $a\in\R^d$ and $v\in C$ there exists a unique $y^v\in AC$ such that
\begin{equation}
		dy^v_t=F_t(y^v_t+v_t)d\mu_t,\quad y_0=a.\label{eq:ody}
\end{equation}
Moreover, the mapping $v\mapsto y^v$ is continuous.
\end{lemma}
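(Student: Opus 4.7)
The approach is a measure-theoretic Picard--Lindel\"of argument, essentially parallel to the classical Lebesgue case but carried out for the Stieltjes integral with respect to $\mu$. First, I would reformulate \eqref{eq:ody} as the integral equation
\[
y_t = a + \int_{[0,t)} F_s(y_s + v_s)\,d\mu_s,\qquad t\in[0,T].
\]
The linear growth bound $|F_t(y)|\le (1+|y|)c_t$ with $c\in L^1$ ensures that for any $y\in C$ the integrand lies in $L^1(\mu)$, so the right-hand side defines a map $T_v:C\to C$. Since $\mu$ is atomless, the resulting function is continuous, and in fact absolutely continuous with respect to $\mu$, so any fixed point automatically lies in $AC$.

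Second, I would establish existence and uniqueness by Banach's fixed point theorem using a weighted supremum norm. Let $C_t=\int_{[0,t)} c_s\,d\mu_s$; this is continuous in $t$ because $\mu$ is atomless. On $C([0,T],\R^d)$ define $\|y\|_\lambda := \sup_t e^{-\lambda C_t}|y_t|$, which is equivalent to the usual sup norm. The Lipschitz condition gives
\[
|(T_v y^1)_t-(T_v y^2)_t|\le \int_{[0,t)} c_s |y^1_s-y^2_s|\,d\mu_s \le \|y^1-y^2\|_\lambda\cdot\frac{e^{\lambda C_t}-1}{\lambda},
\]
so $T_v$ is a contraction with modulus $(1-e^{-\lambda C_T})/\lambda$, which is strictly less than $1$ once $\lambda$ is chosen sufficiently large. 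The unique fixed point is the desired $y^v\in AC$.

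For continuity of $v\mapsto y^v$, I would compare two solutions $y^v$ and $y^w$ directly: the integral equation and the Lipschitz bound yield
\[
|y^v_t-y^w_t|\le \|c\|_{L^1}\|v-w\|_\infty + \int_{[0,t)} c_s |y^v_s-y^w_s|\,d\mu_s,
\]
and Gronwall's inequality (applied with the continuous integrator $C$) then gives $\|y^v-y^w\|_\infty \le \|c\|_{L^1}\,e^{\|c\|_{L^1}}\|v-w\|_\infty$, which establishes the claimed continuity from $C$ to $C$ (hence to $AC$ endowed with the sup norm, which is how the result is used in Theorem~\ref{thm:ctg}).

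Everything here is classical; the only departure from the Lebesgue setting is that $\mu$ replaces $dt$, and the mild substitute for the atomlessness/continuity of the Lebesgue integrator is supplied by the hypothesis that $\mu$ is atomless, which makes $t\mapsto C_t$ continuous and thus legitimizes both the weighted-norm contraction estimate and the Stieltjes form of Gronwall's inequality. I expect no genuine obstacles beyond keeping track of these routine measure-theoretic bookkeeping points.
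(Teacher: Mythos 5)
Your proposal is correct, and the existence--uniqueness half is essentially the paper's argument in a standard equivalent guise: the paper iterates the map $\mathcal T_v$ and shows $|(\mathcal T_v^\nu y^1)_t-(\mathcal T_v^\nu y^2)_t|\le\|y^1-y^2\|\,(\gamma_t)^\nu/\nu!$ so that some power is a contraction, while you use the Bielecki weighted norm $\sup_t e^{-\lambda C_t}|y_t|$ to make $\mathcal T_v$ itself a contraction; both hinge on the same point you flag, namely that atomlessness of $\mu$ makes $t\mapsto C_t$ continuous so the exponential/factorial bounds close up. Where you genuinely diverge is the continuity of $v\mapsto y^v$: the paper establishes uniform boundedness via Gronwall, then equicontinuity, and concludes by an Arzel\`a--Ascoli subsequence argument combined with dominated convergence and uniqueness of the fixed point, whereas you subtract the two integral equations, absorb the $\|v-w\|_\infty\|c\|_{L^1}$ term into the constant, and apply the Stieltjes form of Gronwall directly. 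Your route is shorter and yields the stronger quantitative conclusion that $v\mapsto y^v$ is Lipschitz with constant $\|c\|_{L^1}e^{\|c\|_{L^1}}$, at the cost of needing the Gronwall lemma for a continuous nondecreasing integrator in full (which the paper also invokes, so nothing extra is really being assumed); the paper's compactness argument would survive even under hypotheses too weak for a clean Gronwall bound on the difference, but here both work.
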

\begin{proof} 
Define $\mathcal T_v:C\rightarrow C$ by
\begin{align*}
	(\mathcal T_v y)_t=a+\int_{[0,t]}F_s(y_s+v_s)d\mu_s.
\end{align*}
Let $\gamma_t=\int_{[0,t]}c_sd\mu_s$. For any $y^1,y^2\in C$ we have
\begin{align*}
	|(\mathcal T_v y^1)_t-(\mathcal T_v y^2)_t|\leq \int_{[0,t]}|y^1_s-y^2_s|d\gamma_s,
\end{align*}
so, by induction, $|(\mathcal T_v^\nu y^1)_t-(\mathcal T_v^\nu y^2)_t|\leq \|y^1-y^2\|\frac{(\gamma_t)^\nu}{\nu!}$. For $\nu$ large enough, $\mathcal T_v^\nu$ is a contraction and $\mathcal T_v$ has a unique fixed point, i.e., there is a unique $y^v\in C$ satisfying \eqref{eq:ody}. 

Let $r>0$. For every $v\in C$ with $\|v\|<r$, we have 
\begin{align*}
	|y^v_t| &\leq |a|+\int_{[0,t]} (1+r+|y^v_s|)d\gamma_s\\
	&\le|a|+(1+r)\gamma_T+\int_{[0,t]}|y^v_s|d\gamma_s,
\end{align*}
so, by Gronwall's inequality (\cite[p. 498]{ek86}), $\|y^v\|\leq (|a|+(1+r)\gamma_T)e^{\gamma_T}$. Therefore, for every $v\in C$ with $\|v\|<r$, there is a $\beta\in\R$ such that
\begin{align*}
	|y^v_t-y^v_{t'}|\leq \int_{[t,t']}(1+|y^v_s|)d\gamma_s\leq\beta (\gamma_{t'}-\gamma_t).
\end{align*}
Thus the set $\{y^v\mid \|v\|<r\}$ is uniformly bounded and equicontinuous. Assume that $v\mapsto y^v$ is not continuous. Then there is a sequence $(v^{\nu})_{\nu=1}^\infty$  converging to $v$ such that $y^{v^\nu}\rightarrow\hat y$ and $\hat y\neq y^v$.  By dominated convergence and \eqref{eq:ody},
\[
	\hat y_t=a+\int_{[0,t]}F_s(\hat y_s+v_s)d\mu_s\quad\forall t
\]
so that, by the uniqueness of the fixed point of $\mathcal T_v$, we get $\hat y=y^v$, which is a contradiction.
\qquad\end{proof}

\bibliographystyle{siam}
\bibliography{sp}

\end{document}